\title[Topological models of arithmetic]{Topological models of arithmetic}
\author{Ali Enayat}
  \address[Ali Enayat]{Department of Philosophy, Linguistics, and Theory of Science, University of Gothenburg, Box 200, SE 405 30, Gothenburg, Sweden}
  \email{ali.enayat@gu.se}
  \urladdr{https://www.gu.se/en/about/find-staff/alienayat}
\author{Joel David Hamkins}
 \address[Joel David Hamkins]{Professor of Logic, University of Oxford, and
           Sir Peter Strawson Fellow in Philosophy, University College, Oxford OX1 4BH}
 \email{joel.hamkins@philosophy.ox.ac.uk}
 \urladdr{http://jdh.hamkins.org}
\author{Bartosz Wcis{\l}o}
 \address[Bartosz Wcis{\l}o]{Institute of Mathematics, Polish Academy of Sciences}
 \email{b.wcislo@impan.pl}
 \urladdr{http://duch.mimuw.edu.pl/~bw276697/english/main.html}
\thanks{Commentary can be made about this article on the second author's blog at \href{http://jdh.hamkins.org/topological-models-of-arithmetic}{http://jdh.hamkins.org/topological-models-of-arithmetic}.}
\newtheorem{theorem}{Theorem}
\newtheorem{maintheorem}[theorem]{Main Theorem}
\newtheorem*{maintheorem*}{Main Theorem}
\newtheorem*{maintheorems*}{Main Theorems}
\newtheorem{corollary}[theorem]{Corollary}
\newtheorem*{corollary*}{Corollary}
\newtheorem*{corollaries*}{Corollaries}
\newtheorem{lemma}[theorem]{Lemma}
\newtheorem{question}[theorem]{Question}
\newtheorem*{question*}{Question}
\newtheorem*{questions*}{Questions}
\newtheorem{mainquestion}[theorem]{Main Question} 
\newtheorem*{mainquestion*}{Main Question} 
\newtheorem*{openquestion*}{Open Question} 
\newtheorem{remark}[theorem]{Remark}
\newcommand{\QED}{\end{proof}}
\def\proclaim[#1]{{\bf #1}}
\def\BF#1.{{\bf #1.}}
\def\says#1:#2\par{\item[#1] #2\par}
\newcommand{\Sierpinski}{Sierpi\'{n}ski}
\newcommand{\C}{{\mathbb C}}
\newcommand{\N}{{\mathbb N}}
\newcommand{\Q}{{\mathbb Q}}
\newcommand{\Z}{{\mathbb Z}}
\newcommand{\R}{{\mathbb R}}
\newcommand{\dotminus}{\mathbin{\text{\@dotminus}}}
\newcommand{\@dotminus}{%
  \ooalign{\hidewidth\raise1ex\hbox{.}\hidewidth\cr$\m@th-$\cr}%
}
\newcommand{\of}{\subseteq}
\newcommand{\set}[1]{\{\,{#1}\,\}}
\newcommand{\restrict}{\upharpoonright} 
\newcommand{\satisfies}{\models}
\newcommand{\union}{\cup}
\newcommand{\intersect}{\cap}
\newcommand{\smalllt}{\mathrel{\mathchoice{\raise2pt\hbox{$\scriptstyle<$}}{\raise1pt\hbox{$\scriptstyle<$}}{\raise0pt\hbox{$\scriptscriptstyle<$}}{\scriptscriptstyle<}}}
\newcommand{\smallleq}{\mathrel{\mathchoice{\raise2pt\hbox{$\scriptstyle\leq$}}{\raise1pt\hbox{$\scriptstyle\leq$}}{\raise1pt\hbox{$\scriptscriptstyle\leq$}}{\scriptscriptstyle\leq}}}
   \def\DHLhksqrt#1#2{%
   \setbox0=\hbox{$#1\sqrt{#2\,}$}\dimen0=\ht0
   \advance\dimen0-0.2\ht0
   \setbox2=\hbox{\vrule height\ht0 depth -\dimen0}%
   {\box0\lower0.4pt\box2}}
\newcommand{\boolval}[1]{\mathopen{\lbrack\!\lbrack}\,#1\,\mathclose{\rbrack\!\rbrack}}
\def\[#1]{\boolval{#1}}
\newbox\gnBoxA
\newdimen\gnCornerHgt
\newdimen\gnArgHgt
\def\gcode #1{%
\setbox\gnBoxA=\hbox{$#1$}%
\gnArgHgt=\ht\gnBoxA%
\ifnum     \gnArgHgt<\gnCornerHgt \gnArgHgt=0pt%
\else \advance \gnArgHgt by -\gnCornerHgt%
\fi \raise\gnArgHgt\hbox{\tiny$\ulcorner$} \box\gnBoxA %
\raise\gnArgHgt\hbox{\tiny$\urcorner$}}
\newcommand{\UnderTilde}[1]{{\setbox1=\hbox{$#1$}\baselineskip=0pt\vtop{\hbox{$#1$}\hbox to\wd1{\hfil$\sim$\hfil}}}{}}
\newcommand{\Undertilde}[1]{{\setbox1=\hbox{$#1$}\baselineskip=0pt\vtop{\hbox{$#1$}\hbox to\wd1{\hfil$\scriptstyle\sim$\hfil}}}{}}
\newcommand{\undertilde}[1]{{\setbox1=\hbox{$#1$}\baselineskip=0pt\vtop{\hbox{$#1$}\hbox to\wd1{\hfil$\scriptscriptstyle\sim$\hfil}}}{}}
\newcommand{\UnderdTilde}[1]{{\setbox1=\hbox{$#1$}\baselineskip=0pt\vtop{\hbox{$#1$}\hbox to\wd1{\hfil$\approx$\hfil}}}{}}
\newcommand{\Underdtilde}[1]{{\setbox1=\hbox{$#1$}\baselineskip=0pt\vtop{\hbox{$#1$}\hbox to\wd1{\hfil\scriptsize$\approx$\hfil}}}{}}
\def\<#1>{\left\langle#1\right\rangle}
\newcommand{\PA}{{\rm PA}}
\newcommand{\cell}[1]{\boxit{\hbox to 17pt{\strut\hfil$#1$\hfil}}}
\newcommand{\head}[2]{\lower2pt\vbox{\hbox{\strut\footnotesize\it\hskip3pt#2}\boxit{\cell#1}}}
\newcommand{\boxit}[1]{\setbox4=\hbox{\kern2pt#1\kern2pt}\hbox{\vrule\vbox{\hrule\kern2pt\box4\kern2pt\hrule}\vrule}}
\newcommand{\Col}[3]{\hbox{\vbox{\baselineskip=0pt\parskip=0pt\cell#1\cell#2\cell#3}}}
\newcommand{\tapenames}{\raise 5pt\vbox to .7in{\hbox to .8in{\it\hfill input: \strut}\vfill\hbox to
.8in{\it\hfill scratch: \strut}\vfill\hbox to .8in{\it\hfill output: \strut}}}
\newcommand{\Head}[4]{\lower2pt\vbox{\hbox to25pt{\strut\footnotesize\it\hfill#4\hfill}\boxit{\Col#1#2#3}}}
\newcommand{\Dots}{\raise 5pt\vbox to .7in{\hbox{\ $\cdots$\strut}\vfill\hbox{\ $\cdots$\strut}\vfill\hbox{\
$\cdots$\strut}}}
\renewcommand{\UrlFont}{\sffamily\smaller} 
\addcolon\nolinkurl{#1}}\iffieldundef{eprintclass}{}{\UrlFont{\mkbibbrackets{\thefield{eprintclass}}}}}
\addcolon\nolinkurl{#1}\iffieldundef{eprintclass}{}{\UrlFont{\mkbibbrackets{\thefield{eprintclass}}}}}}
\begin{document}

\begin{abstract}
 Ali Enayat had asked whether there is a model of PA (Peano Arithmetic) that can be represented as  $\<\Q,\oplus,\otimes>$, where $\oplus$ and $\otimes$ are continuous functions on the rationals $\Q$. We prove, affirmatively, that indeed every countable model of \PA\ has such a continuous presentation on the rationals. More generally, we investigate the topological spaces that arise as such topological models of arithmetic. Finite dimensional Euclidean spaces $\R^n$, and compact Hausdorff spaces do not, and neither does any Suslin line; many other spaces do. The status of the space of irrationals remains open.
\end{abstract}

\maketitle

\section{Introduction}\label{Section.Introduction}

This paper arose from the following question that asks whether a model of arithmetic could be continuously presented on the rational numbers (the motivation for this question is explained in Remark \ref{Motivation for Main Question}).

\begin{mainquestion}[Enayat~\cite{Enayat2009:Borel-structures-via-models-of-arithmetic}]
 Are there continuous functions $\oplus$ and $\otimes$ on the rational numbers $\Q$, such that $\<\Q,\oplus,\otimes>$ is a model of \PA\ ?
\end{mainquestion}

\PA\ (Peano Arithmetic) consists of the first order theory $\PA^-$ of the non-negative parts of discretely ordered rings, plus the induction principle for assertions in the first order language of arithmetic. The natural numbers $\<\N,+,\cdot>$ form what is known as the \emph{standard} model of \PA, but there are also many nonstandard models, including continuum many non-isomorphic countable models. Although we take \PA\ as a central case, most of our arguments do not use the full strength of \PA, and our analysis applies to models of various weaker arithmetic theories. Consequently throughout the paper, we use the expression ``a model of arithmetic'' to refer to a model of a sufficiently strong fragment of \PA. The fragments that appear in the results of this paper are: Successor Arithmetic (the first order theory of $\<\mathbb{N}, 0,S>$, where $S(x)=x+1$), Presburger Arithmetic (the first order theory of $\<\mathbb{N},+>$), $\PA^-$, $\mathrm{IOpen}$ ($\PA^-$ plus induction axioms for quantifier free formulas),  $\text{\rm I}\Delta_0$ ($\PA^-$ plus induction axioms for bounded formulas), $\text{\rm I}\Delta_0+\text{Exp}$ (where $\text{Exp}$ asserts the totality of the exponential function), and $\text{\rm I}\Sigma_1$ ($\PA^-$ plus induction axioms for existential ($\Sigma_1$) formulas).   We refer the reader to \cite{HajekPudlak1998:Metamathematics-of-first-order-arithmetic} and \cite{Kaye1991:ModelsOfPeanoArithemtic} for general background on arithmetical theories and their model theory.

We shall answer the main question affirmatively, and indeed, our main theorem (theorem~\ref{maintheorem}) shows that every countable model of $\text{\rm I}\Delta_0$ is continuously presented on $\Q$. We define generally that a \emph{topological} model of arithmetic is a topological space $X$ equipped with continuous functions $\oplus$ and $\otimes$, for which $\<X,\oplus,\otimes>$ satisfies the desired arithmetic theory. In such a case, we shall say that the underlying space $X$ continuously \emph{supports} a model of arithmetic and that the model is continuously \emph{presented} on/upon the space $X$.

\begin{question}
 Which topological spaces support a topological model of arithmetic?
\end{question}

In section~\ref{Section.Non-examples}, we shall prove that many familiar topological spaces do not support models of arithmetic. The list of such spaces includes every finite dimensional Euclidean space $\R^n$, the long line, the Cantor space, and Suslin lines. Meanwhile, there are many other spaces that do support topological models, including many uncountable subspaces of the plane $\R^2$. It remains an open question whether any uncountable Polish space, including the Baire space, can support a topological model of arithmetic.

In light of our answer to the main question in the case of $\Q$, where we showed not only that some models of \PA\ are continuously presented on $\Q$, but that all countable models of \PA\ can be continuously presented on $\Q$---and the same is true of the indiscrete and discrete spaces---it seems natural to ask whether this none/all phenomenon continues with other spaces.

\begin{question}\label{Question.If-one-then-all?}
 If a topological space $X$ continuously supports a model of \PA, then does $X$ continuously support a copy of every model of \PA\ of that cardinality?
\end{question}

We answer question \ref{Question.If-one-then-all?} negatively in theorem~\ref{Theorem.Some-not-all-non-kappa-like}, showing that in every infinite cardinality, there are spaces that continuously support some, but not all models of arithmetic in that cardinality. Thus, the underlying topology is able to distinguish the arithmetic structure. One might similarly ask whether it can distinguish the arithmetic \emph{theory} of the structures.

\begin{question}\label{Question.Topology-cannot-distinguish-the-theory}
 If a topological space $X$ continuously supports a model of \PA, then does it continuously support a model of every completion of \PA?
\end{question}

We also answer question \ref{Question.Topology-cannot-distinguish-the-theory} negatively in corollary \ref{Corollary.X-with-only-standard-model-presented}, although the question remains open if one insists that the topology is Hausdorff or that the space is uncountable.

\section{Every countable model is continuously presented on the rationals}

In this section we prove the main theorem, part (a) of which answers the main question. A soft argument can be used to show that part (b) of the main theorem implies part (a) of the theorem (more specifically, this follows from the line of reasoning used to prove lemma \ref{Lemma.Upward-homogeneity}). However, for clarity of exposition we found it preferable to establish (a) in full detail, and then explain the fine tuning that is needed to establish (b).  Let us also point out that later in theorem \ref{Theorem.Countable-metric-space-presentations}, we show that each countable model of arithmetic also lends itself to continuous presentations on metric spaces obtained by augmenting the rationals $\Q$ with a prescribed finite or countable number of isolated points. In contrast, as noted in remark \ref{Remark on ordered rings}, if the ring $\Z^M$ of a model $M$ of arithmetic admits a continuous presentation on a countable metric space $X$, then either $X$ is homeomorphic to the discrete space $\N$, or $X$ is homeomorphic to the rationals $\Q$.

\begin{maintheorem}\label{maintheorem} Let $M$ be a countable model of the fragment $\text{\rm I}\Delta_0$ of \PA.\

\begin{enumerate}

\item [\rm{(a)}] $M$ admits a continuous presentation on the rationals $\Q$.

\item [\rm{(b)}] The ring of integers $\Z^M$ of $M$ admits a continuous presentation on the rationals $\Q$.
 \end{enumerate}
\end{maintheorem}

\newcommand\fdlt{\mathrel{\triangleleft}}
\begin{proof} (a). We shall prove (a) first for the standard model of arithmetic $\<\N,+,\cdot>$. Every school child knows that when computing integer sums and products by the usual algorithms, the final digits of the result $x+y$ or $x\cdot y$ are completely determined by the corresponding final digits of the inputs $x$ and $y$. Presented with only final segments of the input, the child can nevertheless proceed to compute the corresponding final segments of the sum or product.
\begin{equation*}\small\begin{array}{rcr}
\cdots1261\quad & \qquad                  & \cdots1261\quad\\
\underline{+\quad\cdots 153\quad}&\qquad & \underline{\times\quad\cdots 153\quad}\\
\cdots414\quad & \qquad                  &     \cdots3783\quad\\
               &                         &    \cdots6305\phantom{3}\quad\\
               &                         &  \cdots1261\phantom{53}\quad\\
               &                         &  \underline{\quad\cdots\cdots\phantom{253}\quad}\\
               &                         &  \cdots933\quad\\
\end{array}\end{equation*}

This phenomenon amounts exactly to the continuity of addition and multiplication with respect to what we call the \emph{final-digits} topology on $\N$, which is the topology having basic open sets $U_s$, the set of numbers whose binary representations end with the digits $s$, for any finite binary string $s$. (A similar idea can be used with any base, although the topologies can differ.) In the $U_s$ notation, we include the number that would arise by deleting initial $0$s from $s$; for example, $6\in U_{00110}$. Addition and multiplication are continuous in this topology, because if $x+y$ or $x\cdot y$ has final digits $s$, then by the school-child's observation, this is ensured by corresponding final digits in $x$ and $y$, and so $(x,y)$ has an open neighborhood in the final-digits product space, whose image under the sum or product, respectively, is contained in $U_s$.

Let us make several elementary observations about the topology. The sets $U_s$ do indeed form the basis of a topology, because $U_s\intersect U_t$ is empty, if $s$ and $t$ disagree on some digit (comparing from the right), or else it is either $U_s$ or $U_t$, depending on which sequence is longer. The topology is Hausdorff, because different numbers are distinguished by sufficiently long segments of final digits. There are no isolated points, because every basic open set $U_s$ has infinitely many elements. Every basic open set $U_s$ is clopen, since the complement of $U_s$ is the union of $U_t$, where $t$ conflicts on some digit with $s$. The topology is actually the same as the metric topology generated by the $2$-adic valuation, which assigns the distance between two numbers as $2^{-k}$, when $k$ is largest such that $2^k$ divides their difference; the set $U_s$ is an open ball in this metric, centered at the number represented by $s$ and of radius $2^{-|s|}$. 
(One can also see that it is metric by the Urysohn metrization theorem \cite[theorem~34.1]{Munkres2000:Topology}, since it is a Hausdorff space with a countable clopen basis, and therefore regular.) By a theorem of \Sierpinski\ \cite{Sierpinski1920:Sur-une-propriete-topolologique-des-ensembles-denombrables, Francis2012:Two-topological-uniqueness-theorems}, every countable metric space without isolated points is homeomorphic to the rational line $\Q$, and so we conclude that the final-digits topology on $\N$ is homeomorphic to $\Q$. We've therefore proved that the standard model of arithmetic $\N$ has a continuous presentation on $\Q$, as desired.

But let us belabor the argument somewhat, since we find it noteworthy that the final-digits topology (or equivalently, the $2$-adic metric topology on $\N$) is precisely the order topology of a certain definable order on $\N$, what we call the \emph{final-digits} order, an endless dense linear order, which is therefore order-isomorphic and thus also homeomorphic to the rational line $\Q$, as desired. This will enable the generalization of corollary~\ref{Corollary.Continuous-on-Q^M}.
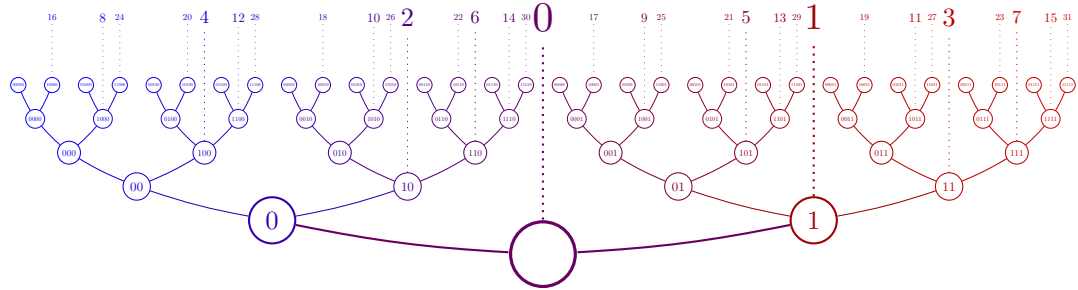
\begin{figure}[h]
\begin{tikzpicture}[scale=.45,xscale=1,>=Stealth,treenode/.style={circle,draw},toparrow/.style={dotted}]
\colorlet{numcolor}{black!35!red!62!blue}
\draw[numcolor] (16,0) node[treenode,scale=1.4,very thick] (root) {\phantom{$0$}};
\draw[numcolor] (root)+(0,7) node[scale=1.6,thick] (number0) {$0$};
\draw[toparrow,numcolor,thick] (root) -- (number0);
\foreach \i in {0,1}
  {
  \pgfmathsetmacro\num{int(\i)}
  \pgfmathsetmacro\c{4*(16*\i+8)}
  \colorlet{numcolor\i}{black!35!red!\c!blue}
  \draw (16*\i+8,1) node[treenode,scale=1,numcolor\i,thick] (\i) {$\i$};
  \ifthenelse{\i=1}{\draw[numcolor,thick,bend right=4] (root) to (\i);
                    \draw[numcolor\i] (\i)+(0,6) node[scale=1.4] (number\i) {$1$};
                    \draw[toparrow,numcolor\i,thick] (\i) -- (number\i);}
                   {\draw[numcolor,thick,bend left=4] (root) to (\i);}
  \foreach \j in {0,1}
    {
    \pgfmathsetmacro\num{int(2*\j+\i)}
    \pgfmathsetmacro\c{4*(16*\i+8*\j+4)}
    \colorlet{numcolor\j\i}{black!35!red!\c!blue}
    \draw (16*\i+8*\j+4,2) node[treenode,scale=.49,numcolor\j\i] (\j\i) {$\j\i$};
    \ifthenelse{\j=1}{\draw[numcolor\i,bend right=4] (\i) to (\j\i);
                      \draw[numcolor\j\i] (\j\i)+(0,5) node[scale=1] (number\j\i) {$\num$};
                      \draw[toparrow,numcolor\j\i] (\j\i) -- (number\j\i);}
                     {\draw[numcolor\i,bend left=4] (\i) to (\j\i);}
    \foreach \k in {0,1}
      {
      \pgfmathsetmacro\num{int(4*\k+2*\j+\i)}
      \pgfmathsetmacro\c{4*(16*\i+8*\j+4*\k+2)}
      \colorlet{numcolor\k\j\i}{black!35!red!\c!blue}
      \draw (16*\i+8*\j+4*\k+2,3) node[treenode,scale=.343,numcolor\k\j\i,thin] (\k\j\i) {$\k\j\i$};
      \ifthenelse{\k=1}{\draw[numcolor\j\i,thin,bend right=4] (\j\i) to (\k\j\i);
                        \draw[numcolor\k\j\i] (\k\j\i)+(0,4) node[scale=.7] (number\k\j\i) {$\num$};
                        \draw[toparrow,numcolor\k\j\i,thin] (\k\j\i) -- (number\k\j\i);}
                       {\draw[numcolor\j\i,thin,bend left=4] (\j\i) to (\k\j\i);}
      \foreach \l in {0,1}
        {
        \pgfmathsetmacro\num{int(8*\l+4*\k+2*\j+\i)}
        \pgfmathsetmacro\c{4*(16*\i+8*\j+4*\k+2*\l+1)}
        \colorlet{numcolor\l\k\j\i}{black!35!red!\c!blue}
        \draw (16*\i+8*\j+4*\k+2*\l+1,4) node[treenode,scale=.24,numcolor\l\k\j\i,very thin]
                                                       (\l\k\j\i) {$\l\k\j\i$};
        \ifthenelse{\l=1}{\draw[numcolor\k\j\i,very thin,bend right=4] (\k\j\i) to (\l\k\j\i);
                          \draw[numcolor\l\k\j\i] (\l\k\j\i)+(0,3) node[scale=.49] (number\l\k\j\i) {$\num$};
                          \draw[toparrow,numcolor\l\k\j\i,very thin] (\l\k\j\i) -- (number\l\k\j\i);}
                         {\draw[numcolor\k\j\i,very thin,bend left=4] (\k\j\i) to (\l\k\j\i);}
        \foreach \m in {0,1}
          {
          \pgfmathsetmacro\num{int(16*\m+8*\l+4*\k+2*\j+\i)}
          \pgfmathsetmacro\c{4*(16*\i+8*\j+4*\k+2*\l+\m)}
          \colorlet{numcolor\m\l\k\j\i}{black!35!red!\c!blue}
          \draw (16*\i+8*\j+4*\k+2*\l+\m+.5,5) node[treenode,scale=.168,numcolor\m\l\k\j\i,ultra thin]
                                (\m\l\k\j\i) {$\m\l\k\j\i$};
          \ifthenelse{\m=1}{\draw[numcolor\l\k\j\i,ultra thin,bend right=4] (\l\k\j\i) to (\m\l\k\j\i);
                            \draw[numcolor\m\l\k\j\i] (\m\l\k\j\i)+(0,2) node[scale=.343] (number\m\l\k\j\i) {$\num$};
                            \draw[toparrow,numcolor\m\l\k\j\i,very thin] (\m\l\k\j\i) -- (number\m\l\k\j\i);}
                           {\draw[numcolor\l\k\j\i,ultra thin,bend left=4] (\l\k\j\i) to (\m\l\k\j\i);}
          }
        }
      }
    }
  }
\end{tikzpicture}
\caption{The final-digits order on the natural numbers}\label{Figure.Final-digits-order}
\end{figure}

Specifically, the final-digits order on the natural numbers, pictured in figure~\ref{Figure.Final-digits-order}, is the order induced from the lexical order on the finite binary representations, but considering the digits from right-to-left, giving higher priority in the lexical comparison to the low-value final digits of the number. More precisely, the final-digits order $n\fdlt m$ holds, if at the first point of disagreement (from the right) in their binary representation, $n$ has $0$ and $m$ has $1$; or if there is no disagreement, because one of them is longer, then the longer number is lower, if the next digit is $0$, and higher, if it is $1$ (this is not the same as treating missing initial digits as zero). Thus, the even numbers appear as the left half of the order, since their final digit is $0$, and the odd numbers as the right half, since their final digit is $1$, and $0$ is directly in the middle; indeed, the highly even numbers, whose representations end with a lot of zeros, appear further and further to the left, while the highly odd numbers, which end with many ones, appear further and further to the right. If one does not allow initial $0$s in the binary representation of numbers, then note that zero is represented in binary by the empty sequence. It is evident that the final-digits order is an endless dense linear order on $\N$, just as the corresponding lexical order on finite binary strings is an endless dense linear order.

The basic open set $U_s$ of numbers having final digits $s$ is an open set in this order, since any number ending with $s$ is above a number with binary form $100\cdots0s$ and below a number with binary form $11\cdots 1s$ in the final-digits order; so $U_s$ is a union of intervals in the final-digits order. Conversely, every interval in the final-digits order is open in the final-digits topology, because if $n\fdlt x\fdlt m$, then this is determined by some final segment of the digits of $x$ (appending initial $0$s if necessary), and so there is some $U_s$ containing $x$ and contained in the interval between $n$ and $m$. Thus, the final-digits topology is the precisely the same as the order topology of the final-digits order, which is a definable endless dense linear order on $\N$. Since this order is isomorphic and hence homeomorphic to the rational line $\Q$, we conclude again that $\<\N,+,\cdot>$ admits a continuous presentation on $\Q$.

The reasoning of the above paragraphs can be undertaken in $\text{\rm I}\Delta_0$. More explicitly, it is well-known that the theory $\text{\rm I}\Delta_0$ can prove that every number has a (unique) binary representation. This is based on the remarkable fact (see [HP, Section V3(c)]) that the graph of the exponential function has a $\Delta_0$ definition (provably in $\text{\rm I}\Delta_0$). Moreover, it can be readily verified that within $\text{\rm I}\Delta_0$, the finite-digits order is an endless dense linear order. It follows that $M$ can see that its addition and multiplication are continuous with respect to the order topology of its final-digits order. Since $M$ is countable, the final-digits order of $M$ makes it a countable endless dense linear order, which by Cantor's theorem is therefore order-isomorphic and hence homeomorphic to $\Q$. Thus, $M$ has a continuous presentation on the rational line $\Q$, as desired.

The executive summary of the proof is: the arithmetic of the standard model $\N$ is continuous with respect to the final-digits topology, which is the same as the $2$-adic metric topology on $\N$, and this is homeomorphic to the rational line, because it is the order topology of the final-digits order, a definable endless dense linear order; applied in a nonstandard model $M$ of a sufficiently strong fragment of \PA\, this observation means the arithmetic of $M$ is continuous with respect to its rational line $\Q^M$, which for a countable model $M$ is isomorphic to the actual rational line $\Q$, and so such an $M$ is continuously presentable upon $\Q$. This concludes the proof of part (a) of Theorem \ref{maintheorem}. \medskip

(b)
 Consider first the standard integers $\Z$. The analogue of the school-child's observation is that addition and multiplication of integers is continuous with respect to the signed-finals-digits topology, in the sense that if you know the final digits of the numbers and their sign (positive or negative), then you can know the corresponding final digits and sign of the output. If we represent integers by their binary digits with an indication for the sign---we represent zero by the empty sequence---then the basic open sets in this topology have the form $U_{s+}$ or $U_{s-}$, for any nonempty binary sequence $s$, where the first set is the set of positive numbers whose binary expansion has final digits $s$, and the second set refers analogously to negative numbers, plus the sets of negative numbers $U_{-}$, positive numbers $U_{+}$ and the sets $U_{00\cdots0}$, which are the numbers whose final digits end all in zeros, but including both positive and negative numbers. This topology is the same as the order topology of the signed-final-digits order, pictured in figure~\ref{Figure.Signed-final-digits-order} below. This is an endless dense linear order, defined by the lexical order on the signed binary representation (but note how the sign affects the direction of the lexical comparison for negative numbers), and so the topology is homeomorphic to the rational line.
\newcommand{\unaryminus}{\llap{\scalebox{0.5}[1.0]{\( - \)}}}
\begin{figure}[h]
\begin{tikzpicture}[scale=.4,xscale=.55,>=Stealth,treenode/.style={circle,draw},toparrow/.style={dotted}]
\colorlet{rootcolor}{black!35!blue}
\draw[rootcolor] (0,-1) node[treenode,scale=1.4,very thick] (root) {\phantom{$0$}};
\draw[rootcolor] (root)+(0,7) node[scale=1.6,thick] (number0) {$0$};
\draw[toparrow,rootcolor,thick] (root) -- (number0);
\foreach \s in {-1,1}
  {
  \colorlet{numcolor}{black!35!red!62!blue}
  \pgfmathsetmacro\x{16}
  \ifthenelse{\s=1}
             {\draw (\s*\x,0) node[treenode,scale=1,numcolor,thick] (sign\s) {$+$};
              \draw[rootcolor,thick,bend right=3] (root) to (sign\s);}
             {\draw (\s*\x,0) node[treenode,scale=1,numcolor,thick] (sign\s) {$-$};
              \draw[rootcolor,thick,bend left=3] (root) to (sign\s);}
  \foreach \i in {0,1}
    {
    \pgfmathsetmacro\num{int(\i)}
    \pgfmathsetmacro\x{16*\i+8}
    \pgfmathsetmacro\c{4*\x}
    \colorlet{numcolor\i}{black!35!red!\c!blue}
    \draw (\s*\x,1) node[treenode,scale=.49,numcolor\i] (\i) {$\i$};
    \ifthenelse{\i=1}{\draw[numcolor,thick,bend right=\s*3] (sign\s) to (\i);
                      \draw[numcolor\i] (\i)+(0,5) node[scale=1] (number\i) {$\ifthenelse{\s=1}{}{\unaryminus}1$};
                      \draw[toparrow,numcolor\i] (\i) -- (number\i);}
                     {\draw[numcolor,thick,bend left=\s*3] (sign\s) to (\i);}
    \foreach \j in {0,1}
      {
      \pgfmathsetmacro\num{int(2*\j+\i)}
      \pgfmathsetmacro\x{16*\i+8*\j+4}
      \pgfmathsetmacro\c{4*\x}
      \colorlet{numcolor\j\i}{black!35!red!\c!blue}
      \draw (\s*\x,2) node[treenode,scale=.343,numcolor\j\i,thin] (\j\i) {$\j\i$};
      \ifthenelse{\j=1}{\draw[numcolor\i,bend right=\s*3] (\i) to (\j\i);
                        \draw[numcolor\j\i] (\j\i)+(0,4) node[scale=.7] (number\j\i) {$\ifthenelse{\s=1}{}{\unaryminus}\num$};
                        \draw[toparrow,numcolor\j\i,thin] (\j\i) -- (number\j\i);}
                       {\draw[numcolor\i,bend left=\s*3] (\i) to (\j\i);}
      \foreach \k in {0,1}
        {
        \pgfmathsetmacro\num{int(4*\k+2*\j+\i)}
        \pgfmathsetmacro\x{16*\i+8*\j+4*\k+2}
        \pgfmathsetmacro\c{4*\x}
        \colorlet{numcolor\k\j\i}{black!35!red!\c!blue}
        \draw (\s*\x,3) node[treenode,scale=.24,numcolor\k\j\i, very thin] (\k\j\i) {$\k\j\i$};
        \ifthenelse{\k=1}{\draw[numcolor\j\i,bend right=\s*3] (\j\i) to (\k\j\i);
                          \draw[numcolor\k\j\i] (\k\j\i)+(0,3) node[scale=.49] (number\k\j\i) {$\ifthenelse{\s=1}{}{\unaryminus}\num$};
                          \draw[toparrow,numcolor\k\j\i,very thin] (\k\j\i) -- (number\k\j\i);}
                         {\draw[numcolor\j\i,thin,bend left=\s*3] (\j\i) to (\k\j\i);}
        \foreach \l in {0,1}
          {
          \pgfmathsetmacro\num{int(8*\l+4*\k+2*\j+\i)}
          \pgfmathsetmacro\x{16*\i+8*\j+4*\k+2*\l+1}
          \pgfmathsetmacro\c{4*\x}
          \colorlet{numcolor\l\k\j\i}{black!35!red!\c!blue}
          \draw (\s*\x,4) node[treenode,scale=.168,numcolor\l\k\j\i,ultra thin]
                                                         (\l\k\j\i) {$\l\k\j\i$};
          \ifthenelse{\l=1}{\draw[numcolor\k\j\i,very thin,bend right=\s*3] (\k\j\i) to (\l\k\j\i);
                            \draw[numcolor\l\k\j\i] (\l\k\j\i)+(0,2) node[scale=.343] (number\l\k\j\i) {$\ifthenelse{\s=1}{}{\unaryminus}\num$};
                            \draw[toparrow,numcolor\l\k\j\i,very thin] (\l\k\j\i) -- (number\l\k\j\i);}
                           {\draw[numcolor\k\j\i,very thin,bend left=\s*3] (\k\j\i) to (\l\k\j\i);}
          }
        }
      }
    }
  }
\end{tikzpicture}
\caption{The signed-final-digits order on the integers}\label{Figure.Signed-final-digits-order}
\end{figure}

 By carrying out this argument inside any countable nonstandard model of $M$ of $\text{\rm I}\Delta_0$ , we conclude that $\Z^M$ also has a continuous presentation on $\Q^M$, which is order-isomorphic and hence homeomorphic to $\Q$.
\end{proof}

\begin{remark}

\rm{Let us briefly discuss a variant of the final-digits order, which some readers might find natural, but which generates a different topology from the final-digits topology. The order, pictured in figure~\ref{Figure.Final-digits-variation} arises from the lexical order on infinitary binary sequences, if one embeds the finite binary sequences into that space by appending infinitely many zeros at the left. Thus, one imagines representing the number $6$ with $\cdots0000110$. This amounts to treating missing initial digits in the usual binary representations as $0$.}
\begin{figure}[h]
\begin{tikzpicture}[scale=.35,xscale=1.2,>=Stealth,treenode/.style={circle,draw},toparrow/.style={dotted}]
\colorlet{numcolor}{red!0!blue!65!black}
\draw[numcolor] (0,0) node[treenode,scale=1,very thick] (root) {\phantom{$0$}};
\draw[numcolor] (root)+(0,12) node[scale=1.6,thick] (number0) {$0$};
\draw[toparrow,numcolor,thick] (root)+(0,10.5) -- (number0);
\foreach \i in {0,1}
  {
  \pgfmathsetmacro\num{int(\i)}
  \pgfmathsetmacro\c{4*(16*\i)}
  \colorlet{numcolor\i}{red!\c!blue!65!black}
  \draw (16*\i,2) node[treenode,scale=.9,numcolor\i,thick] (\i) {$\i$};
  \ifthenelse{\i=1}{\draw[numcolor,thick,bend right=4] (root) to (\i);
                    \draw[numcolor\i] (\i)+(0,10) node[scale=1.4] (number\i) {$1$};
                    \draw[toparrow,numcolor\i,thick] (\i) +(0,8.5)-- (number\i);}
                   {\draw[numcolor,thick] (root) -- (\i);}
  \foreach \j in {0,1}
    {
    \pgfmathsetmacro\num{int(2*\j+\i)}
    \pgfmathsetmacro\c{4*(16*\i+8*\j)}
    \colorlet{numcolor\j\i}{red!\c!blue!65!black}
    \draw (16*\i+8*\j,4) node[treenode,scale=.49,numcolor\j\i] (\j\i) {$\j\i$};
    \ifthenelse{\j=1}{\draw[numcolor\i,bend right=4] (\i) to (\j\i);
                      \draw[numcolor\j\i] (\j\i)+(0,8) node[scale=1] (number\j\i) {$\num$};
                      \draw[toparrow,numcolor\j\i] (\j\i) +(0,6.5)-- (number\j\i);}
                     {\draw[numcolor\i] (\i) -- (\j\i);}
    \foreach \k in {0,1}
      {
      \pgfmathsetmacro\num{int(4*\k+2*\j+\i)}
      \pgfmathsetmacro\c{4*(16*\i+8*\j+4*\k)}
      \colorlet{numcolor\k\j\i}{red!\c!blue!65!black}
      \draw (16*\i+8*\j+4*\k,6) node[treenode,scale=.343,numcolor\k\j\i,thin] (\k\j\i) {$\k\j\i$};
      \ifthenelse{\k=1}{\draw[numcolor\j\i,thin,bend right=4] (\j\i) to (\k\j\i);
                        \draw[numcolor\k\j\i] (\k\j\i)+(0,6) node[scale=.7] (number\k\j\i) {$\num$};
                        \draw[toparrow,numcolor\k\j\i,thin] (\k\j\i) +(0,4.5) -- (number\k\j\i);}
                       {\draw[numcolor\j\i,thin] (\j\i) -- (\k\j\i);}
      \foreach \l in {0,1}
        {
        \pgfmathsetmacro\num{int(8*\l+4*\k+2*\j+\i)}
        \pgfmathsetmacro\c{4*(16*\i+8*\j+4*\k+2*\l)}
        \colorlet{numcolor\l\k\j\i}{red!\c!blue!65!black}
        \draw (16*\i+8*\j+4*\k+2*\l,8) node[treenode,scale=.24,numcolor\l\k\j\i,very thin]
                                                       (\l\k\j\i) {$\l\k\j\i$};
        \ifthenelse{\l=1}{\draw[numcolor\k\j\i,very thin,bend right=4] (\k\j\i) to (\l\k\j\i);
                          \draw[numcolor\l\k\j\i] (\l\k\j\i)+(0,4) node[scale=.49] (number\l\k\j\i) {$\num$};
                          \draw[toparrow,numcolor\l\k\j\i,very thin] (\l\k\j\i)+(0,2.5) -- (number\l\k\j\i);}
                         {\draw[numcolor\k\j\i,very thin] (\k\j\i) -- (\l\k\j\i);}
        \foreach \m in {0,1}
          {
          \pgfmathsetmacro\num{int(16*\m+8*\l+4*\k+2*\j+\i)}
          \pgfmathsetmacro\c{4*(16*\i+8*\j+4*\k+2*\l+\m)}
          \colorlet{numcolor\m\l\k\j\i}{red!\c!blue!65!black}
          \draw (16*\i+8*\j+4*\k+2*\l+\m,10) node[treenode,scale=.168,numcolor\m\l\k\j\i,ultra thin]
                                (\m\l\k\j\i) {$\m\l\k\j\i$};
          \ifthenelse{\m=1}{\draw[numcolor\l\k\j\i,ultra thin,bend right=4] (\l\k\j\i) to (\m\l\k\j\i);
                            \draw[numcolor\m\l\k\j\i] (\m\l\k\j\i)+(0,2) node[scale=.343] (number\m\l\k\j\i) {$\num$};
                            \draw[toparrow,numcolor\m\l\k\j\i,very thin] (\m\l\k\j\i) -- (number\m\l\k\j\i);}
                           {\draw[numcolor\l\k\j\i,ultra thin] (\l\k\j\i) -- (\m\l\k\j\i);}
          }
        }
      }
    }
  }
\end{tikzpicture}
\caption{A variant of the final-digits order on the natural numbers}\label{Figure.Final-digits-variation}
\end{figure}
\rm{The order is manifestly different from the final-digits order; for example, $0$ is now a least element. Nevertheless, this is a dense linear order with a least element, and so it is order-isomorphic to $\Q\intersect[0,1)$, which is homeomorphic to $\Q$. We should like to point out, however, first, that this order does not give rise to the final-digits topology, since with this variant order, the set of numbers having final digits $110$ has a least element, the number $6$, and so it is not open in this order topology. Secondly, the successor function $x\mapsto x+1$ is not continuous with respect to this order, since $1+1=2$, and $2$ has an open neighborhood containing only even numbers, but every open neighborhood of $1$ contains both even and odd numbers, whose sums therefore will not all be in that neighborhood of $2$. Thus, even the successor function is not continuous with respect to this modified order, and in this sense, it was not the right order to consider. Nevertheless, one can rescue things somewhat by noting that the final-digits topology is exactly the right-open topology of this order, with basic open sets consisting of half-open intervals $[n,m)$ in the order here, and therefore, by the argument of the main theorem, arithmetic is continuous with respect to the right-open topology of this order.}
\end{remark}
\begin{corollary}\label{Corollary.Continuous-on-Q^M}
 Every model $M$ (of any cardinality) of $\text{\rm I}\Sigma_1$ has a continuous presentation on the topological space $\Q^M$ with the order topology.
\end{corollary}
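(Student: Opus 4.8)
The plan is to internalize the proof of the main theorem and then transport the resulting continuous arithmetic across an order-isomorphism that the model itself constructs. The reasoning of the main theorem---that the final-digits order $\fdlt$ is a definable endless dense linear order and that $+$ and $\cdot$ are continuous with respect to its order topology---manipulates only binary expansions, and so it formalizes already in $\text{\rm I}\Delta_0+\text{Exp}$, hence a fortiori in $\text{\rm I}\Sigma_1$. Thus for any $M\models\text{\rm I}\Sigma_1$ the model proves internally that $\<M,+,\cdot>$ has continuous arithmetic with respect to the order topology of $\fdlt^M$, and that $(M,\fdlt^M)$ is an endless dense linear order.

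First I would isolate the key lemma: $\text{\rm I}\Sigma_1$ proves that there is a definable order-isomorphism between the final-digits order on $\N$ and the rational field $\Q$. To establish this I would formalize Cantor's back-and-forth argument. Fix the identity enumeration of $\N$ together with a definable enumeration $\<q_i : i \in \N>$ of $\Q$ (coding rationals as numbers), and define by recursion on stages a sequence of finite partial order-isomorphisms, matching the least unmatched element of $\N$ at even stages and the least unmatched rational at odd stages; at each step density and endlessness of both orders supply a suitable partner, whose code can moreover be bounded so that the search provably succeeds. This is a $\Sigma_1$-recursion, and $\text{\rm I}\Sigma_1$ proves that such a recursion yields a total $\Delta_1$-definable function; a $\Sigma_1$-induction then shows that the union $h$ of the approximations is a well-defined order-preserving bijection, since element $i$ of $\N$ is matched by stage $2i$ and rational $q_j$ by stage $2j+1$.

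Then I would apply this lemma inside $M$. Its defining formula yields, interpreted in $M$, an honest order-isomorphism $h\colon (M,\fdlt^M)\to(\Q^M,<)$, where $\Q^M$ is the rational field as constructed in $M$. Because $h$ is an order-isomorphism it is a homeomorphism of the associated order topologies, and $h\times h$ is then a homeomorphism of the product spaces. Transporting the operations by $\oplus(p,q):=h\bigl(h^{-1}(p)+^M h^{-1}(q)\bigr)$ and $\otimes(p,q):=h\bigl(h^{-1}(p)\cdot^M h^{-1}(q)\bigr)$ produces a structure $\<\Q^M,\oplus,\otimes>\cong M$; since $+^M$ and $\cdot^M$ are continuous for the order topology of $\fdlt^M$ (the internalized main theorem) and $h$ is a homeomorphism, $\oplus$ and $\otimes$ are continuous for the order topology of $\Q^M$. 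This is exactly the desired continuous presentation, and taking $M$ countable recovers the main theorem, since then $(\Q^M,<)$ is externally a countable endless dense linear order and hence homeomorphic to $\Q$.

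The main obstacle, I expect, is confirming that the back-and-forth construction really goes through in $\text{\rm I}\Sigma_1$ rather than requiring the full induction of \PA: one must check that the stage-by-stage recursion is genuinely $\Sigma_1$, so that its totality and $\Delta_1$-definability are available, and that the searches for matching partners are provably successful using only $\Sigma_1$-induction together with the density and endlessness of the two orders. Everything after that---that an order-isomorphism is a homeomorphism of order topologies and that continuity transfers across a homeomorphism of product spaces---is routine and uses no induction beyond what the internalized main theorem already supplies.
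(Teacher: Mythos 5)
Your proposal is correct and follows essentially the same route as the paper's own proof: the paper likewise internalizes the main theorem's final-digits argument in $M$ and then invokes Cantor's isomorphism theorem applied inside $M$---noting its provability in $\text{\rm I}\Sigma_1$---to obtain the order-isomorphism, hence homeomorphism, between $(M,\fdlt^M)$ and $\Q^M$. You have merely spelled out the $\Sigma_1$-formalization of the back-and-forth construction and the transport of the operations, details the paper leaves implicit.
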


\begin{proof}
The model $M$ has a continuous presentation with respect to the final-digits order as defined in $M$. This order is seen as a countable endless dense linear order, from the perspective of $M$, and so by an appropriate version of Cantor's theorem applied in $M$, it is order-isomorphic and hence homeomorphic to the rationals $\Q^M$ of $M$. More explicitly, $M$ can handle constructions by $\Sigma_1$-recursion since $M\satisfies \text{\rm I}\Sigma_1$, and the ordering on $\Q^M$ and the final digits order $\fdlt$ on $M$ are both $\Delta_1$-definable in $M$, therefore Cantor's back-and-forth construction can be implemented within $M$ to yield a definable isomorphism between the endless dense linearly ordered sets $\Q^M$ and $(M,\fdlt)$.
\end{proof}

The investigation of saturated dense linear orders was initiated by Hausdorff (see~\cite{Hausdorff2002:GesammelteWerke}), who introduced the familiar $\eta$ notation for the countable endless dense linear order, and more generally $\eta_\alpha$ for the saturated endless dense linear order of cardality $\aleph_\alpha$, which exists precisely under the set-theoretical assumption that $\aleph_\alpha^{\smalllt\aleph_\alpha}=\aleph_\alpha$.

\goodbreak
\begin{corollary}
 The order $\eta_\alpha$, when it exists, supports a topological model of arithmetic.
\end{corollary}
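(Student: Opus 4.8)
The plan is to realize $\eta_\alpha$ as the rational line $\Q^M$ of a suitably saturated model $M$ of \PA, and then simply to quote Corollary~\ref{Corollary.Continuous-on-Q^M}. Recall that $\eta_\alpha$ exists exactly when $\aleph_\alpha^{\smalllt\aleph_\alpha}=\aleph_\alpha$, and that this is precisely the cardinal-arithmetic hypothesis under which a complete theory in a countable language possesses a saturated model of cardinality $\aleph_\alpha$. So I would fix any completion $T\supseteq\PA$ and let $M$ be its saturated model of size $\aleph_\alpha$. The case $\alpha=0$ is already covered by the main theorem, since any countable model of \PA\ has $\Q^M$ a countable endless dense linear order, hence $\eta_0\iso\Q$; so I may assume $\aleph_\alpha$ is uncountable, where the standard elementary-chain construction of the saturated model goes through (note that $\aleph_\alpha^{\smalllt\aleph_\alpha}=\aleph_\alpha$ forces $2^{\aleph_0}\le\aleph_\alpha$, so the at most $2^{\aleph_0}$ types of $T$ cause no difficulty).

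By Corollary~\ref{Corollary.Continuous-on-Q^M}, this model $M$ carries a continuous presentation of arithmetic on the space $\Q^M$ equipped with its order topology, where $\Q^M$ denotes the rational line as computed inside $M$ (equivalently, the final-digits order of $M$). It therefore suffices to show that, viewed externally as a linear order, $\Q^M$ is order-isomorphic to $\eta_\alpha$; the resulting homeomorphism of order topologies then transports the arithmetic structure of $M$ onto $\eta_\alpha$, exhibiting $\eta_\alpha$ as the support of a topological model of arithmetic.

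Now $\Q^M$ is a definable linear order of $M$ which $M$ believes to be endless and dense, so externally it is an endless dense linear order of cardinality $\aleph_\alpha$. The key step is to check that it is moreover $\aleph_\alpha$-saturated as a linear order. Given subsets $A,B\subseteq\Q^M$ with $a<b$ for all $a\in A$ and $b\in B$, and with $|A|,|B|<\aleph_\alpha$, I would consider the partial type $p(x)$ asserting $a<x$ for every $a\in A$ and $x<b$ for every $b\in B$, using the rational order of $M$. Because $M$ thinks $\Q^M$ is dense and endless, every finite fragment of $p$ is realized in $M$, so $p$ is a consistent type over a parameter set of size $<\aleph_\alpha$; by the $\aleph_\alpha$-saturation of $M$ it is realized, filling the given cut. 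Hence $\Q^M$ is an $\aleph_\alpha$-saturated endless dense linear order of cardinality $\aleph_\alpha$, and so, being by definition the saturated endless dense linear order of that cardinality, it is $\eta_\alpha$, as required.

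The only genuine obstacle is this saturation-transfer step---that a definable dense order inherits saturation from its ambient model---but it dissolves once one observes that a cut in $\Q^M$ determined by few elements is literally a finitely satisfiable one-variable type of $M$. What makes the hypothesis ``when it exists'' exactly the right one is the coincidence of the two cardinal conditions: the equality $\aleph_\alpha^{\smalllt\aleph_\alpha}=\aleph_\alpha$ that guarantees the existence of $\eta_\alpha$ is the same one that guarantees the existence of the saturated model of $T$, which is unsurprising since $\eta_\alpha$ is itself a saturated linear order.
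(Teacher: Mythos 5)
Your proposal is correct and takes essentially the same route as the paper: take a saturated model $M$ of arithmetic of size $\aleph_\alpha$ (which exists under $\aleph_\alpha^{\smalllt\aleph_\alpha}=\aleph_\alpha$), observe that $\Q^M$ is then a saturated endless dense linear order of size $\aleph_\alpha$ and hence order-isomorphic to $\eta_\alpha$, and invoke corollary~\ref{Corollary.Continuous-on-Q^M}. Your explicit cut-filling argument for the saturation transfer, and your separate handling of $\alpha=0$ (where no countable saturated model of \PA\ exists, but the main theorem applies directly), simply fill in details the paper leaves implicit.
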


\begin{proof}
If $\eta_\alpha$ exists, then because $\aleph_\alpha^{\smalllt\aleph_\alpha}=\aleph_\alpha$, it follows that there are saturated structures of size $\aleph_\alpha$ in any first-order language of size less than $\aleph_\alpha$. Let $M$ be such a saturated model of arithmetic of size $\aleph_\alpha$. Its rational numbers $\Q^M$ will therefore be a saturated endless dense linear order of size $\aleph_\alpha$, hence order-isomorphic to $\eta_\alpha$. So $\eta_\alpha$, when it exists, supports a topological model of arithmetic.
\end{proof}

\begin{question}
Which topological spaces arise as $\Q^M$ for a model of arithmetic $M\satisfies\PA$?
\end{question}

The spaces $\Q^M$ are always at least a little homogeneous, since there are many order-isomorphisms of $\Q^M$ internal to $M$; for example, any finite partial order automorphism extends inside $M$ to a full order automorphism. Also, $\Q^M$ is never complete as an order, since the model can see that $\sqrt{2}^M$ determines a cut in $\Q^M$ with no least upper bound; indeed, the order is totally disconnected. The following theorem collects some basic facts about linear orders definable in nonstandard models of arithmetic.

\begin{theorem} \label{basic order properties} Let $M$ be a nonstandard model of \PA. \begin{enumerate}

\item [\rm{(a)}]  No infinite complete linear order is definable in $M$.

\item [\rm{(b)}] If a linear order $\<L,\leq_L>$ is definable in $M$, then every limit point in $L$ has the same cofinality (from below or above) as the cofinality of $M$, and this is furthermore the same as the cofinality or coinitiality of $L$ itself.

\item [\rm{(c)}] If $M$ is uncountable, then $\Q^M$ is not separable, and indeed, it has an uncountable discrete subset, and therefore admits an uncountable family of disjoint intervals.
\footnote{In particular, $\Q^M$ is never a Suslin line, nor is it the space of irrational numbers. While part (c) of Theorem \ref{basic order properties} rules out these latter spaces for $\Q^M$, it does not seem necessarily to rule them out as supporting topological models of arithmetic in some way other than as $\Q^M$.}
\end{enumerate}
\end{theorem}

\begin{proof} (a)
Suppose that $\<L,\leq_L>$ is a linear order definable in a nonstandard model of arithmetic $M\satisfies\PA$. Since Ramsey's theorem is formalizable in \PA, from the perspective of $M$ there must either be an infinite increasing or decreasing sequence $\<c_i\mid i\in M>$. But the limit of $c_k$ for standard $k$ must exist, if $L$ is complete, and from this $M$ can define the standard cut, which is a contradiction. \medskip

(b)   If a point $x$ is a limit point of $L$ in $M$, then $M$ thinks that $x$ is the limit of an infinite sequence, and so $x$ is the limit of a sequence whose order type is the same as the order type of $M$. Thus, the cofinality of $x$ in $L$ (from either direction) is the same as the cofinality of $M$ itself. The same argument works for the cofinality and coinitiality of $L$ itself, if the order does not have maximal or minimal elements, respectively. \medskip

(c) Every model $M$ of \PA\ sits as a discrete set inside its rationals $\Q^M$. The model can place disjoint intervals around its integers, such as $(n-\frac13,n+\frac13)$, and thereby produce an uncountable family of disjoint intervals.

\end{proof}

\section{Some spaces do not support a topological model of arithmetic}\label{Section.Non-examples}

In this section we prove now that several types of spaces do not support a topological model of arithmetic. We begin with the case of compact Hausdorff spaces. The proof of the theorem below shows that if $M$ is a compact Hausdorff space and $\<M,+,\cdot>$ is a model of a sufficiently strong fragment of \PA\ then neither addition nor multiplication, separately, is continuous, and indeed, these functions are not even continuous separately in just one coordinate at a time.

\goodbreak

\begin{theorem}\label{Theorem.No-compact-Hausdorff-space}
 No compact Hausdorff space supports a topological model of arithmetic. More precisely:
 \begin{enumerate}
 \item [\rm{(a)}] Addition is not continuous for any compact Hausdorff topological model of Presburger Arithmetic.

 \item [\rm{(b)}] Multiplication is not continuous for any compact Hausdorff topological model of $\text{\rm I}\Delta_0+\mathrm{Exp}$.

 \end{enumerate}

 \end{theorem}

\begin{proof}
(a) Suppose that $M$ is a model of Presburger Arithmetic. To show that addition is not continuous, for any $k\in M$, consider the interval $[k,\infty)$ in the sense of $M$. This interval is the image of $M$ under the map $x\mapsto x+k$. If this map is continuous, then the image is compact and therefore also closed in $M$, and so the family of all such intervals is thus a nested family of nonempty closed sets with empty intersection, violating compactness. So addition is not continuous, not even separately in the first coordinate. \medskip

(b) Let $M$ be a model of $\text{\rm I}\Delta_0+\text{Exp}$. Consider the sets $M\cdot(k!)=\set{n\cdot(k!)\mid n\in M}$, where $k!$ means the factorial of $k$ as seen by the model $M$ (the factorial function is well-defined within $\text{\rm I}\Delta_0+\text{Exp}$). If multiplication is continuous, then these sets are the image of $M$ under a continuous map and therefore are compact and hence closed. And they are nested, as $k$ increases in $M$, with empty intersection. So we have a nested family of closed sets with empty intersection, contradicting compactness. So multiplication is not continuous, not even separately in the first coordinate.

\end{proof}

\begin{theorem}\label{ruling out euclidean spaces} No Euclidean space $\R^n$, in any finite dimension $n \ge 1$, supports a topological model of Successor Arithmetic.
\end{theorem}

\begin{proof} The $n=1$ case is straightforward. Suppose that $\<\R,r_0,S(x)>$ is a topological model of Successor Arithmetic, where $r_0$ is the zero of the model (which need not equal the real number 0). Also note that $S(x)$ is the successor function in the sense of the model of arithmetic, which need not equal the real number $x+1$. If $S(x)$ were continuous, then the image of $\R$ under this map would be connected, since $\R$ is connected. But the complement of a point in $\R$ is disconnected and Successor Arithmetic proves that the range of the successor function omits exactly one point, namely $r_0$. Therefore $S(x)$ is discontinuous. \medskip

Now suppose that $M$ is a topological model of Successor Arithmetic that is homemorphic to $\R^n$ for some finite $n>1$. Then the continuity of the successor function implies that the punctured space $\R^n \setminus \{r_0\}$ is a continuous injective image of $\R^n$. On the other hand, by Brouwer's ``Invariance of Domain Theorem'' ~\cite[section 62]{Munkres2000:Topology}, if $U$ is an open subset of $\R^n$ and $f:U \rightarrow \R^n$ is injective and one-to-one, then $f[U]$ is homeomorphic to $U$. This shows that the continuity of the successor function implies that $\R^n$ is homeomorphic to the punctured space $\R^n \setminus \{r_0\}$, which is well-known to be false; one standard way to verify this is to note that the $n$-th homotopy group $\pi_n (\R^n)$  is the trivial group since $\R^n$ is contractible (i.e., homotopy equivalent to a point), but $\pi_n (\R^n\setminus \{r_0 \}) = \mathbb{Z}$ since $\R^n \setminus \{r_0\}$ is homotopy equivalent to the $n$-dimensional sphere $S_n$, and by a basic result of algebraic topology $\pi_n (S_{n})= \mathbb{Z}$, cf.~\cite [corollary 4.25]{Hatcher:Algebraic Topology}.
\end{proof}
The following lemma can also be used to rule out finite dimensional Euclidean spaces $\R^n$ based on considerations of
dimension theory in topology that show that there can be no continuous injection of $\R^m$ into $\R^n$, when $n<m$; such a continuous injection would provide a homeomorphism of the $m$-dimensional cube $[0,1]^m$ with its image in $\R^n$, which violates~\cite[theorem~50.6]{Munkres2000:Topology}, stating that closed subsets of $\R^n$ have dimension at most $n$. So $M$ cannot have $\R^n$ as its underlying space.
\begin{lemma}\label{Lemma.Pairing-function}
  If $M$ is a topological model of $\mathrm{IOpen}$, then there is a continuous pairing function, that is, a continuous injection from $M^2$ to $M$. Indeed, there is a continuous bijection of $M^2$ with the even numbers of $M$.
\end{lemma}

\begin{proof}
Provably in $\mathrm{IOpen}$, the Cantor pairing function $p(n,m)=\frac 12(n+m)(n+m+1) +m$ is a bijection between $M^2$ and $M$. We cannot generally divide by $2$ in a continuous manner, however, and so to make a continuous injection, as opposed to bijection, we simply use $q(n,m)=(n+m)(n+m+1)+2m$, which is the double of the Cantor function. This is now a continuous injection of $M^2$ into $M$, and indeed, it is a continuous bijection of $M^2$ with the even numbers of $M$.

\end{proof}

\begin{remark}
\rm{The argument for ruling out $\R$ in theorem \ref{ruling out euclidean spaces} shows that no space that is connected, yet becomes disconnected upon the deletion of any point (or any two points), supports a topological model of arithmetic. Therefore the two-way version of the long line, which extends to $\omega_1$ in both directions, does not support a topological model of successor arithmetic.  For the ordinary long line, that is, $[0,1)\cdot\omega_1$, there is a least element, whose deletion does not disconnect the space, although the deletion of any two points will disconnect the space. This again violates continuity of the successor function, if we simply perform it twice, if necessary, for then the image will omit exactly two points.}
\end{remark}

The following theorem rules out another class of spaces which includes the real line (but not the long line). Indeed, this was the first proof we found for ruling out the real line.

\begin{theorem}If a space is uncountable and connected, and has the further properties that (i) there is no descending $\omega_1$-sequence of connected subspaces, and (ii) there is no partition of the space into uncountably many disjoint uncountable connected subspaces, then it does not support a topological model of $\mathrm{I}\Delta_0+\mathrm{Exp}$.
\end{theorem}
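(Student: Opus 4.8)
The plan is to suppose toward a contradiction that $\<X,\oplus,\otimes>$ is a topological model of arithmetic with $X$ uncountable, connected, and satisfying (i) and (ii), and to argue by cases according to whether some element of the underlying model $M=\<X,\oplus,\otimes>$ has uncountably many predecessors. The whole argument rests on producing connected subspaces as continuous images of the connected space $X$. Two families suffice. First, the \emph{final segments} $[k,\infty)^M$: since every element $\ge k$ has the form $x\oplus k$, this set is exactly the image of $X$ under the continuous map $x\mapsto x\oplus k$, hence connected. Second, the \emph{residue classes} $C^k_r=\set{x\mid x\equiv r\pmod k}$ for $0\le r<k$: each is the image of $X$ under the continuous map $x\mapsto (x\otimes k)\oplus r$, hence connected. (Note that only one-variable sections of $\oplus$ and $\otimes$ are used, so as in the earlier theorems the conclusion will in fact be that not even the separate coordinate functions can all be continuous.)

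In the first case, suppose some $k\in M$ has uncountably many predecessors, i.e.\ the initial segment $[0,k)^M$ is uncountable. Here I would invoke the division algorithm (available already in a weak fragment such as $\text{\rm I}\Delta_0$), by which every element of $M$ lies in exactly one class $C^k_r$ with $0\le r<k$. Thus $\set{C^k_r\mid 0\le r<k}$ is a partition of $X$ into pairwise disjoint sets, each of which is connected by the observation above and infinite (hence non-degenerate), since $C^k_r$ contains $r,\ k\oplus r,\ (k\oplus k)\oplus r,\dots$. There are $[0,k)^M$-many such classes, hence uncountably many. This is precisely a partition of $X$ into uncountably many disjoint connected subspaces, contradicting (ii).

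In the second case, every $k\in M$ has only countably many predecessors. Since $M=\Union_{k\in M}[0,k)^M$ is the increasing union of these countable initial segments and $M$ is uncountable, no countable subset of $M$ can be cofinal; that is, $M$ has uncountable cofinality. I would then construct by transfinite recursion a strictly increasing sequence $\<k_\xi\mid\xi<\omega_1>$ in $M$: at stage $\xi$ the countably many previously chosen $k_\eta$ ($\eta<\xi$) are not cofinal, so some $k_\xi$ lies strictly above all of them. The associated final segments $[k_\xi,\infty)^M$ then form a strictly descending $\omega_1$-sequence of connected subspaces, contradicting (i). As the two cases are exhaustive, no such topological model can exist.

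Specializing to $X=\R$ recovers the statement for the reals: $\R$ is connected, it satisfies (i) because it is ccc and so admits no strictly monotone $\omega_1$-sequence of interval endpoints, and it satisfies (ii) because a partition into uncountably many disjoint non-degenerate connected subspaces would be a partition into uncountably many disjoint non-degenerate intervals, impossible by the ccc. The step I expect to be the crux is the Case~1 observation that residue classes modulo a nonstandard $k$ are connected, realized as continuous images under $x\mapsto(x\otimes k)\oplus r$; this is what converts the purely order-theoretic hypothesis ``$k$ has uncountably many predecessors'' into an honest topological partition. The Case~2 argument, by contrast, is a routine reuse of the fact already exploited in the compact Hausdorff theorem that addition by $k$ maps $X$ onto the final segment $[k,\infty)^M$.
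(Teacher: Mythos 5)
Your proof is correct, and its skeleton is the paper's: the engine is the same observation that continuous images of the connected space are connected, and the same dichotomy on the order structure of $M$, with your Case~1 (some element with uncountably many predecessors, partition into residue classes $C^k_r$ as images of $x\mapsto (x\otimes k)\oplus r$, contradicting (ii)) being verbatim the paper's second case. Where you genuinely diverge is the other case: given a strictly increasing sequence $\<k_\xi\mid\xi<\omega_1>$ (which you, unlike the paper, explicitly justify as the exhaustive complement, via the uncountable-cofinality recursion), the paper takes its descending connected sets to be the factorial multiples $M\cdot(k_\alpha!)$, images under multiplication, whereas you take the final segments $[k_\xi,\infty)$, images under the translations $x\mapsto x\oplus k_\xi$. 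Your choice is strictly more economical: it avoids factorials entirely, so the totality of exponentiation is never needed and the division algorithm is the only arithmetic input---the whole argument goes through in $\text{\rm I}\Delta_0$ (indeed with open induction), whereas the paper explicitly retreats to $\text{\rm I}\Delta_0+\text{Exp}$ precisely because of the factorials. What the paper's version buys instead is the sharper localization that \emph{multiplication} fails to be separately continuous in that case, matching the two-operation refutation in its compact Hausdorff theorem; your version pins the failure on addition. One further point in your favor: you correctly noticed that hypothesis (ii) must be read with non-degenerate pieces (singletons partition $\R$ into continuum many connected sets), and your residue classes are infinite, so your verification that $\R$ satisfies (i) and (ii) via the ccc is careful where the paper's statement leaves this implicit.
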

\begin{proof}
Suppose that $\<M,+,\cdot>$ is an uncountable topological model of $\mathrm{I}\Delta_0+\mathrm{Exp}$, whose underlying space has the features mentioned in the theorem. Recall that the factorial function and the division algorithm can be readily handled in $\mathrm{I}\Delta_0+\mathrm{Exp}$. We split into two cases. First, suppose that the model happens to have an increasing $\omega_1$-sequence of numbers $\<k_\alpha\mid\alpha<\omega_1>$. (But see theorem \ref{Theorem.Polish-space-no-uncountable-chains} for why this case does not arise in $\R$ or in any Polish space.) We claim that in this case multiplication cannot be continuous, even just in one coordinate. To see this, consider the image of the whole space $M\cdot(k_\alpha!)$ after  multiplication by the factorial of $k_\alpha$, as the model sees it. This is the continuous image of a connected set and therefore connected. Since these sets are descending as $k_\alpha$ increases, this would violate assumption (i) on the underlying space of $M$. In the remaining case, since $M$ is uncountable, there must be a number $d$ in the model having at least $\omega_1$ many predecessors $r<d$. For each such $r$, let $A_r$ be the numbers with residue $r$ modulo $d$ as the model sees it. Thus, $A_r$ is simply the image of $M$ under the function $q\mapsto dq+r$, which is injective and continuous. So each $A_r$ is uncountable and connected, and we have therefore found a partition of $M$ into uncountably many disjoint uncountable connected sets, which would violate assumption (ii) on the underlying space of $M$.
\end{proof}

\begin{question}\label{unsolved question}
 Does the Baire space support a topological model of \PA? Does $\R^\omega$ support a topological model of \PA? Does any uncountable Polish space support a topological model of $\PA$?

\end{question}

\begin{remark} \label{Motivation for Main Question}
\rm{

(a) The above question, initially posed in ~\cite{Enayat2009:Borel-structures-via-models-of-arithmetic}, was motivated by a key result of \cite{MalitzMycielskiReinhardt1991:The-axiom-of-choice-the-Lowenheim-Skolem-theorem-and-Borel-models,
MalitzMycielskiReinhardt1992:Erratum-to-AC-LS-and-Borel-models}, that shows that every consistent first order theory in a countable language has a model whose universe is the Baire space and all of whose definable relations are $F_\sigma$ as well as $G_\delta$. So we know that there are low-complexity Borel models on the Baire space and other Polish spaces; the question is whether one can realize the model with continuous functions.\footnote{Note that continuous functions in Polish spaces have closed graphs, or in other words, in the class $F$. In compact Hausdorff spaces, this is an equivalence, and so theorem \ref{Theorem.No-compact-Hausdorff-space} shows that in compact Hausdorff spaces, there is no model of arithmetic whose operations have a closed graph. Does this extend to $\R$ and the other spaces we have ruled out? Will Brian's observation \cite{Brian.MO307419:Can-an-injective-f:R^m->R^n-have-a-closed-graph}, posted on MathOverflow in answer to a question of the first author, may be relevant; he shows that there are no injections from $\R^m$ to $\R^n$ with a closed graph, when $m>n$; but note that the composition of functions with a closed graph need not necessarily have a closed graph.}  In light of the homeomorphism between the Baire space and the space of irrationals (whose topology is induced by a linear order), the L{\"o}wenheim-Skolem theorem implies that if the Baire space supports a topological model of arithmetic, then any countable subset $X$ of the Baire space is contained in a countable subspace $Y$ of the Baire space that continuously supports a model of arithmetic. Putting this fact together with Cantor's theorem concerning the countable categoricity of endless dense linear orders, one obtains the implication: If the Baire space continuously supports a model of arithmetic, then so does the space of rationals $\Q$. This explains the motivation for Main Question 1 since a negative answer to it would have automatically provided a negative answer to question \ref{unsolved question} (for the Baire space).

\medskip

(b) It is easy to construct uncountable Polish spaces that support topological models of Successor Arithmetic. For example, let $X$  be the union of $\mathbb{N\times \{}0\}$ and $\mathbb{Z}\times \mathbb{I}$, where $\mathbb{I}$ is the interval $[1,2]$. $X$ is a closed subspace of $\R^2$, and therefore is an uncountable Polish space. Let $f:X\rightarrow X$ be the restriction to $X$ of the map $F(x,y)=(x+1,y)$. Clearly $f$ is continuous on $X$ since $F$ is continuous on $\R^2$. It is routine to verify that $\left\langle X,z,f\right\rangle $ is a model of Successor Arithmetic with the choice of $z=(0,0)$, i.e., $f$ is injective, the range of $f$ is $X \setminus \{ z \}$, and $f^{(n)}(x) \neq x$ for all $x \in X$ and all $n \geq 1$, where  $f^{(n)}$ is the $n$-fold iteration of the function $f$.  With slightly more effort, one can show that the subspace $Y$ of $\R$ consisting of the union of $\{(n-1)/n: n \in \N \}$  and the open interval $(1,2)$ also supports a topological model of Successor Arithmetic. Note that $Y$ is $G_{\delta}$ in $\R$ and is therefore a Polish space.

}
\end{remark}

The following observation shows that certain well-studied uncountable models of arithmetic cannot be continuously presented on Polish spaces.

\goodbreak
\begin{theorem}\label{Theorem.Polish-space-no-uncountable-chains}
 If a model of arithmetic $M$ of Presburger Arithmetic is continuously presented on an uncountable Polish space, then there are no increasing or decreasing $\omega_1$-sequences in the order. In particular, it is not $\kappa$-like for any cardinal $\kappa$.
\end{theorem}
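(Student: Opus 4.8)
The plan is to argue by contradiction, treating only the increasing case (the decreasing case is symmetric, via the reversed order on $\Z^M$, or by applying the same analysis to the initial segments $(-\infty,k_\alpha)^M$). So suppose $\langle k_\alpha\mid\alpha<\omega_1\rangle$ is strictly increasing in the order of $M$. First I would pass to the \emph{tails} $T_\alpha=\{x\mid x\geq k_\alpha\}$, observing that each is exactly the range of the translation $x\mapsto x\oplus k_\alpha$, which is a continuous injection of the whole Polish space $X$ into itself. By the Lusin--Souslin theorem, a continuous injective image of a Polish space is Borel, so every $T_\alpha$ is a Borel subset of $X$; moreover the family is strictly $\subsetneq$-decreasing, with $k_\alpha\in T_\alpha\smallsetminus T_{\alpha+1}$, and the difference sets $J_\alpha=T_\alpha\smallsetminus T_{\alpha+1}=[k_\alpha,k_{\alpha+1})^M$ are pairwise disjoint and nonempty.

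The engine I would use is that a second countable space admits no strictly $\subsetneq$-monotone $\omega_1$-chain of open sets: to each strict step one assigns a basic open set witnessing the difference, and this assignment is injective into a countable base. Dually, the closures $\overline{T_\alpha}$ form a decreasing $\omega_1$-chain of closed sets and so must stabilize; by regularity of $\omega_1$ there is a countable ordinal $\alpha_0$ and a fixed closed set $F$ with $\overline{T_\alpha}=F$ for all $\alpha\geq\alpha_0$. Thus from $\alpha_0$ onward every tail $T_\alpha$ is dense in the Polish subspace $F$, and the complementary initial segments $F\cap C_\alpha$ (where $C_\alpha=X\smallsetminus T_\alpha$) form an increasing $\omega_1$-chain, each having empty interior in $F$.

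To finish inside the Baire space $F$ I would analyze the disjoint Borel pieces $J_\alpha$ for $\alpha\geq\alpha_0$. Each has the Baire property, and a disjoint family of Baire-property sets can contain only countably many non-meager members: a non-meager Baire-property set is comeager in some basic open set, and disjointness makes the assignment of such a witnessing basic open set injective into a countable base. Hence co-countably many $J_\alpha$ are meager in $F$. The \textbf{main obstacle} is exactly here: converting ``uncountably many meager pieces'' into an outright contradiction is \emph{not} available through a naive appeal to the Baire Category Theorem, since BCT (like countable additivity of measure) governs only countable unions, and the additivity of the meager ideal can consistently be $\omega_1$. I expect to overcome this by first reducing to a \emph{bounded} chain: an unbounded increasing $\omega_1$-sequence would give $M$ cofinality $\omega_1$, which I would exclude using the completeness and first countability of the metric together with the structural results proved above for definable linear orders---namely that the cofinality of such an order is realized by genuine limiting behaviour, which in a first countable space is countable. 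Once the chain is confined to a fixed initial segment $[0,b)^M$, the decreasing tails acquire a common point, countable cofinal approximations to the cut become available, and the category computation above does yield the contradiction. The stated corollary that $M$ is not $\kappa$-like then follows, since any uncountable $\kappa$-like order contains an increasing $\omega_1$-chain: pass to a minimal initial segment of size $\aleph_1$, which is $\aleph_1$-like and hence of cofinality $\omega_1$.
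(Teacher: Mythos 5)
Your preparatory machinery is all correct: the tails $T_\alpha=\{x\mid x\geq k_\alpha\}$ are ranges of the continuous injective translations $x\mapsto x\oplus k_\alpha$ (injective by cancellativity in $\PA^-$), hence Borel by Lusin--Souslin; strictly monotone $\omega_1$-chains of open sets are impossible in a second countable space, so the closures $\overline{T_\alpha}$ stabilize; and a disjoint family of Baire-property sets has only countably many non-meager members. But the proof genuinely breaks exactly at the point you flag, and your two-step repair does not repair it. (i) Your reduction of the unbounded case is a non sequitur: nothing connects the \emph{order}-cofinality of $M$ to first countability or completeness of the ambient space, because the topology of a continuous presentation need not bear any relation to the order topology --- in the final-digits topology of the main theorem, every basic open set $U_s$ is order-unbounded, and order limits are not topological limits. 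The paper's earlier results on cofinalities concern orders \emph{definable in} $M$ and cofinality in the sense of $M$; they supply no ``genuine limiting behaviour'' in the Polish space. Indeed, excluding order-cofinality $\omega_1$ is itself an instance of the theorem being proved, so it cannot serve as a prior lemma. (ii) In the bounded case, the common point $b$ of all the tails buys nothing: the cut $\sup_\alpha k_\alpha$ has cofinality $\omega_1$ from below \emph{by construction}, so no ``countable cofinal approximations'' exist, and the real obstacle --- that the additivity of the meager ideal can consistently be exactly $\omega_1$, so uncountably many meager pieces need not have meager union --- is completely unaffected by confining the chain to $[0,b)$. The sentence ``the category computation above does yield the contradiction'' is an assertion where the entire difficulty lives.

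The paper's own proof sidesteps category altogether: since $\oplus$ is continuous, the relation $a<b\iff\exists c\,(a\oplus c\oplus 1^M=b)$ exhibits $<$ as the projection of a closed subset of $X^3$, hence analytic ($\Sigma^1_1$); the Harrington--Shelah theorem then says outright that no analytic order on a Polish space admits an $\omega_1$-increasing or $\omega_1$-decreasing chain. That theorem is proved by effective descriptive-set-theoretic and forcing methods, not by elementary Baire category --- which is strong evidence that your endgame cannot be completed by naive means. Your Lusin--Souslin observations show you were in the right neighborhood; the fix is simply to note the analyticity of $<$ and cite Harrington--Shelah. One further caution: your closing argument for the ``$\kappa$-like'' corollary is also gappier than you suggest --- a \emph{minimal} initial segment of size $\aleph_1$ need not exist. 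If $C$ is the segment of points with only countably many predecessors, then either $C$ has cofinality $\omega_1$ (and your argument succeeds, since $C$ is then $\aleph_1$-like), or $C$ is countable; in the latter case, when $\kappa$ is singular of countable cofinality, one must chase the cut above $C$ and may obtain only a \emph{decreasing} $\omega_1$-chain --- which suffices here only because the theorem excludes those as well.
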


\begin{proof}
If addition $+$ is continuous, then the order $<$ is analytic, being the projection of a Borel relation via $a<b\longleftrightarrow\exists c\ (a+c+1=b)$. But a deep theorem due to Harrington and Shelah~\cite{HarringtonShelah1982:Counting-equivalence-classes-for-co-kappa-Souslin-equivalence-relations} shows that no analytic order can have increasing or decreasing  $\omega_1$-sequences.
\end{proof}

\section{The topology of a topological model of arithmetic}

We should like to focus now on the underlying topology of a topological model of arithmetic, finding general features these models exhibit and investigating how one might change the topology to another related topology with respect to which the model remains continuous.

\begin{theorem}\label{Theorem.Discrete-indiscrete-initial-segment-final-segment}
 Every model $M$ of a sufficiently strong fragment of \PA ~admits a continuous presentation with respect to the following topologies on $M$.
 \begin{enumerate}
   \item [\rm{(a)}] The discrete and indiscrete topologies on $M$.
   \item [\rm{(b)}] The rational line $\Q^M$, as $M$ sees it.
   \item [\rm{(c)}] The initial-segment topology, whose open sets are initial segments of $M$.
   \item [\rm{(d)}] The final-segment topology, whose open sets are final segments of $M$.
 \end{enumerate}

\end{theorem}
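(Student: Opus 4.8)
The plan is to treat the four cases in turn, of which (1) and (2) are essentially immediate, while (3) and (4) share a common argument resting on a single observation about one-sided order topologies.

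For case (1), I would simply note that every function out of a discrete space is continuous and every function into an indiscrete space is continuous. Hence when $M$ carries the discrete topology, the product $M^2$ is discrete as well and $\oplus,\otimes$ are automatically continuous; and when the target $M$ carries the indiscrete topology, the only open sets are $\emptyset$ and $M$, whose preimages are always open, so again $\oplus,\otimes$ are continuous. Case (2) is precisely Corollary~\ref{Corollary.Continuous-on-Q^M} (granting the modest additional arithmetic strength that construction requires for binary expansions), so nothing new is needed there.

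The substance lies in (3) and (4). First I would record the key observation: for the linear order $\<M,<>$, the open sets of the initial-segment topology are exactly the downward-closed subsets of $M$, and---this is the point to get right---the open sets of the \emph{product} topology on $M^2$ are exactly the downward-closed subsets of $M^2$ in the coordinatewise order. The forward inclusion is clear, since every basic box $U\times V$ with $U,V$ initial segments is downward closed in $M^2$, and downward-closedness is preserved under unions. For the reverse, given a downward-closed $W\subseteq M^2$ and a point $(x,y)\in W$, the principal down-sets $\set{z : z\leq x}$ and $\set{z : z\leq y}$ are themselves initial segments, hence open, and their product is an open box containing $(x,y)$ and contained in $W$ by downward-closedness; thus $W$ is a union of such boxes and is open.

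Granting this, continuity of a map $f\colon M^2\to M$ with respect to the initial-segment topology reduces to asking that $f^{-1}(O)$ be downward closed whenever $O$ is, and this holds as soon as $f$ is monotone, that is, $(x',y')\leq (x,y)$ implies $f(x',y')\leq f(x,y)$: then $f(x',y')\leq f(x,y)\in O$ forces $f(x',y')\in O$. It remains only to observe that in any model of $\PA^-$---the non-negative part of a discretely ordered ring---both operations are monotone in each argument, since $a\leq b$ yields $a\oplus c\leq b\oplus c$ and, because every element is non-negative, $a\otimes c\leq b\otimes c$; joint monotonicity then follows by composing the one-variable inequalities. Hence $\oplus$ and $\otimes$ are continuous in the initial-segment topology. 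Case (4) follows by the order-reversing symmetry: replacing $<$ by its reverse turns initial segments into final segments and leaves monotonicity---hence, by the same argument run upward, continuity---intact, so the final-segment case is obtained verbatim.

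I expect the only genuinely delicate point to be the product-topology characterization used in (3): one must verify that the product of the \emph{one-sided} order topology still carries enough open sets to witness downward-closedness of an arbitrary open $W$, which is exactly what the principal down-sets $\set{z:z\leq x}$ supply as honest open boxes. Everything else is a routine check of monotonicity, drawing on nothing beyond the ordered-semiring axioms available already in $\PA^-$.
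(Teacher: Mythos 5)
Your proof is correct and takes essentially the same route as the paper: there, cases (1) and (2) are dispatched exactly as you do, and cases (3) and (4) are verified by observing that if $x+y\leq k$ then $x'+y'\leq k$ for all $x'\leq x$, $y'\leq y$ (dually for final segments), which is precisely your monotonicity argument with the principal down-sets $\set{z: z\leq x}\times\set{z: z\leq y}$ serving as the open box---the paper even remarks afterward that the key to (3) and (4) is the monotonicity of addition and multiplication. Your only addition is to package this as an explicit characterization of the product-topology opens as the coordinatewise downward-closed sets, a harmless (and correct) abstraction of the same idea.
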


\begin{proof}
Statement (a) is immediate, since all operations on a set are continuous with respect to discrete or indiscrete topologies. Statement (2) follows from the main theorem, so ``sufficiently strong fragment'' for statement (b) means $\mathrm{I}\Delta_0$. For statement (c), consider the initial-segment topology. If $x+y\leq k$, then for any $x'\leq x$ and $y'\leq y$, we would have $x'+y'\leq k$. Thus, we have an open neighborhood of $(x,y)$ whose image under addition is contained in the open set $[0,k]$, and so addition is continuous. A similar argument works for multiplication: if $xy\leq k$, then any $x'\leq x$ and $y'\leq y$ will have $x'y'\leq k$. So we have verified statement (c). For statement (d), we consider the final-segment topology. If $x+y\geq k$, then for any $x'\geq x$ and $y'\geq y$, we will have $x'+y'\geq k$, and so addition is continuous. Similarly, if $xy\geq k$, then for any $x'\geq x$ and $y'\geq y$, we have $x'y'\geq k$, and so multiplication is continuous, verifying statement (4). Thus, for statements (3) and (4) ``sufficiently strong fragment" means $\PA^-$.
\end{proof}

The key for statements (3) and (4) was the monotone nature of addition and multiplication in a model of arithmetic; the argument would not work (and the result fails) in ordered rings, where multiplication by negative numbers is order reversing. Note that those topologies are not Hausdorff.

\begin{lemma}\label{Lemma.Upward-homogeneity}
 Suppose $M$ is a topological model of Presburger arithmetic. Then:
 \begin{enumerate}
  \item [\rm{(a)}] $M$ is upward homogeneous, meaning that whenever $a<b$ in $M$, then there is a continuous injection on $M$ taking $a$ to $b$. In particular, if $b$ is isolated, then also all $a<b$ are isolated.
  \item [\rm{(b)}] If $M$ is homeomorphic to the rationals $\Q$, then for each $a \in M$, $\{ x \in M: x>a \}$ is also homeomorphic to $\Q$.
  \end{enumerate}
\end{lemma}

\begin{proof}
For (a) simply use the map $x\mapsto x+k$, where $k=b-a$ in $M$. (b) follows from (a) together with \Sierpinski 's characterization of $\Q$ as the only countable metric space (up to homeomorphism) with no isolated points.
\end{proof}

Topological models of arithmetic might fail to have the corresponding downward homogeneity, however, in light of the following.

\begin{theorem}\label{Theorem.Making-points-isolated}
 Suppose that $M$ is a topological model of $\PA^-$, and consider any element $b\in M$. Let $M^*$ be the same model of arithmetic, but refining the topology to make every number $a\leq b$ isolated in $M^*$. Then $M^*$ is continuous with respect to this new topology.
\end{theorem}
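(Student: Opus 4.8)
The plan is to define the new topology explicitly and then verify continuity by a case analysis on whether the value of the operation lands in the initial segment $[0,b]=\{a\in M:a\leq b\}$ or strictly above it. Let $\tau$ denote the given topology on $M$, and let $\tau^*$ be the topology generated by $\tau$ together with all singletons $\{a\}$ for $a\leq b$; this is visibly the coarsest refinement of $\tau$ in which every $a\leq b$ is isolated. A routine check shows that $\tau^*$ consists exactly of the sets $U\cup S$ with $U\in\tau$ and $S\subseteq[0,b]$ (this family contains $\tau$ and all the required singletons, and is closed under arbitrary unions and finite intersections, since in the latter every cross term lands inside $[0,b]$). I would record two consequences. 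First, $\tau\subseteq\tau^*$. Second, the key \emph{bypass property}: if $x>b$ and $x\in V\in\tau^*$, then writing $V=U\cup S$ with $U\in\tau$ and $S\subseteq[0,b]$ we have $x\notin S$, hence $x\in U\subseteq V$; so every $\tau^*$-open neighborhood of a point above $b$ contains a genuine $\tau$-open neighborhood of that point.

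With this in hand, continuity of addition splits cleanly. Fix $(x,y)$ and a $\tau^*$-open $V\ni x+y$. If $x+y>b$, the bypass property yields $U\in\tau$ with $x+y\in U\subseteq V$, and $\tau$-continuity of $+$ gives $\tau$-open (hence $\tau^*$-open) sets $U_1\ni x$ and $U_2\ni y$ with $U_1+U_2\subseteq U\subseteq V$, so $U_1\times U_2$ is the desired neighborhood. If instead $x+y\leq b$, then since all elements of $M$ are non-negative we have $x\leq x+y\leq b$ and likewise $y\leq b$, so both $\{x\}$ and $\{y\}$ are $\tau^*$-open and their product maps to the single point $x+y\in V$. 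Hence addition is $\tau^*$-continuous.

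The multiplication case runs the same way except for one genuinely new wrinkle, which I expect to be the main obstacle. For output $xy>b$ the bypass argument is verbatim, and for $xy\leq b$ with $x,y\geq 1$ monotonicity gives $x\leq xy\leq b$ and $y\leq xy\leq b$, so the isolated-singleton neighborhood $\{x\}\times\{y\}$ works as before. The difficulty is that $xy\leq b$ no longer forces the inputs into $[0,b]$ when a factor is $0$: if $x=0$ (or $y=0$) then $xy=0\leq b$ while the other coordinate may be arbitrarily large, so isolating the points of $[0,b]$ threatens continuity precisely at pairs such as $(0,y)$ with $y$ large. This is resolved by exploiting that the whole space is $\tau^*$-open: take the neighborhood $\{0\}\times M$ (respectively $M\times\{0\}$), which is $\tau^*$-open because $\{0\}$ is isolated, and whose image under multiplication is $\{0\}\subseteq V$ since $0=xy\in V$. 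With this choice handling the zero-factor case, multiplication is $\tau^*$-continuous, so $M^*$ is a topological model of arithmetic with every $a\leq b$ isolated, as desired.
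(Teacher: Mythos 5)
Your proposal is correct and takes essentially the same route as the paper's own proof: the same description of the refined topology as sets of the form $U\cup S$ with $U$ open in the original topology and $S\subseteq[0,b]$, the same case split on whether the output lies above $b$ (fall back to the old continuity) or in $[0,b]$ (use isolated singletons), and the same resolution of the multiplication wrinkle at zero via the open set $\{0\}\times M$. If anything, your explicit three-way analysis for multiplication handles the zero-factor case a bit more carefully than the paper's compressed phrasing.
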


\begin{proof}
The open sets of $M^*$ have the form $U^*=U\union A$, where $U$ is open in $M$ and $A\of[0,b]$. To verify continuity in $M^*$, suppose that $x+y\in U^*=U\union A$. If $x+y\leq b$, then both $x$ and $y$ also are $\leq b$, and so $(x,y)$ is isolated in the plane, which is sufficient for this instance of continuity. Otherwise, we have $x+y\in U$, and so by the continuity of $M$ there is a neighborhood of $(x,y)$ in $M^2$ whose image under addition is contained in $U$. Thus, addition is continuous in $M^*$. A similar argument works for multiplication, with a slight wrinkle about zero. Namely, if $b\neq 0$ and $xy\leq b$, then both $x,y\leq b$ and we may proceed as with addition. And when $xy=0$, then one of them must be $0$, and in this case, $M\times\{0\}$ is an open set in the product space of $M^*$ whose image maps to $0$ under multiplication. So again we have verified continuity.
\end{proof}

Since isolating any set of points in a metric space preserves metrizability, it follows that $M^*$ will be metrizable, if $M$ was. One can view the construction of theorem \ref{Theorem.Making-points-isolated} as an instance of the more general topology-blending constructions, which we now develop. These constructions are generally asymmetric, in that one is either adding open sets of small numbers or removing open sets of large numbers.

\begin{theorem}\label{Theorem.Topology-blending}
 Suppose that $M$ is a model of $\PA^-$ that is continuous with respect to topologies $\tau_0$ and $\tau_1$, where $\tau_0$ refines $\tau_1$. For any initial segment $I\of M$, let $\tau^*$ be the topology generated by sets of the form $U\union A$, where $U\in\tau_1$ and $A$ is an open subset of $I$ in the subspace topology of $\tau_0$. Then $M$ is continuous with respect to the topology $\tau^*$.
\end{theorem}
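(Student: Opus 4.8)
The plan is to reduce continuity of $\oplus$ and $\otimes$ to a check on a convenient generating family for $\tau^*$. First I would verify that the sets $U\cup A$, with $U\in\tau_1$ and $A$ a $\tau_0$-relatively-open subset of $I$, form a \emph{basis} (not merely a subbasis). The only thing to check is closure under finite intersection, and here the hypothesis that $\tau_0$ refines $\tau_1$ is exactly what is needed: since every $U\in\tau_1$ is also $\tau_0$-open, each set $U\cap I$ is $\tau_0|_I$-open, so when one expands $(U_1\cup A_1)\cap(U_2\cup A_2)$ the cross terms $U_i\cap A_j$ and $A_1\cap A_2$ are all $\tau_0|_I$-open subsets of $I$, leaving an intersection of the same form $U\cup A$. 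Two facts worth recording from this computation are that $\tau_1\subseteq\tau^*$ globally and that $\tau^*$ restricted to $I$ is precisely $\tau_0|_I$.

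With a basis in hand, I would check continuity at a pair $(x,y)$ against a basic target $U\cup A$ containing $x\oplus y$, splitting on which piece the output lands in. If $x\oplus y\in U$, the assumed $\tau_1$-continuity of $\oplus$ supplies $\tau_1$-open neighborhoods of $x$ and $y$ whose sum lies in $U$; these are $\tau^*$-open because $\tau_1\subseteq\tau^*$, so this ``large-number'' case is immediate, and the same works for $\otimes$. The remaining case is $x\oplus y\in A\subseteq I$. Since $I$ is an initial segment and $x,y\le x\oplus y$, both $x$ and $y$ already lie in $I$; writing $A=V\cap I$ with $V\in\tau_0$, the $\tau_0$-continuity of $\oplus$ gives $\tau_0$-open sets around $x$ and $y$, and intersecting them with $I$ produces $\tau_0|_I$-open (hence $\tau^*$-open) neighborhoods whose pairwise sums land in $V$. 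For multiplication the analysis is the same once $xy\neq 0$, since then $x,y\le xy\in I$ forces $x,y\in I$; the degenerate product $xy=0$ is handled by the zero wrinkle already met in Theorem~\ref{Theorem.Making-points-isolated}.

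The step I expect to be the real obstacle is ensuring, in this last case, that the perturbed output stays inside $I$ and therefore inside $A=V\cap I$, rather than escaping over the top of $I$ into the region controlled only by the coarser $\tau_1$, where it need not lie in the prescribed $U$. This is precisely where the initial-segment hypothesis together with the monotonicity of the arithmetic operations must be exploited, exactly as in the proof of Theorem~\ref{Theorem.Discrete-indiscrete-initial-segment-final-segment}(3): because $x'\le x$ and $y'\le y$ force $x'\oplus y'\le x\oplus y\in I$ (and likewise $x'y'\le xy$ for products), one keeps the output within $I$ by shrinking the neighborhoods of $x$ and $y$ to stay below them in the order. Making this rigorous is the delicate point, since a merely $\tau_0|_I$-small neighborhood need not be order-bounded; the argument must combine the $\tau_0$-continuity used for the $V$-part with monotonicity used for the $I$-part. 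Once the output is pinned inside $I$, it automatically lies in $V\cap I=A$, and the two cases paste together to give continuity of both operations with respect to $\tau^*$.
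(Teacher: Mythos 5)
Your first two paragraphs are a faithful reconstruction of the paper's own argument---the paper performs exactly this case split, handling $x\oplus y\in U$ by $\tau_1$-continuity and $x\oplus y\in A=W\cap I$ by $\tau_0$-continuity followed by restriction to $I$---and your preliminary observations (the sets $U\cup A$ form a basis thanks to refinement, $\tau_1\subseteq\tau^*$, and $\tau^*\restrict I=\tau_0\restrict I$) are correct and worth making explicit. But the obstacle you isolate in your third paragraph is a genuine gap, your sketch does not close it, and in fact it cannot be closed: the proposed repair of ``shrinking the neighborhoods of $x$ and $y$ to stay below them in the order'' is unavailable, since nothing in the hypotheses supplies order-bounded open sets; your own caveat that a $\tau_0\restrict I$-small neighborhood need not be order-bounded is fatal, not merely delicate. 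Indeed, the statement as given is refutable for arbitrary initial segments. Take $M=\N$, let $\tau_0$ be the final-segment topology, which is continuous by theorem~\ref{Theorem.Discrete-indiscrete-initial-segment-final-segment}(4), let $\tau_1$ be the indiscrete topology, and let $I=[0,10]$. Then $\tau^*$ consists of $\emptyset$, $\N$, and the sets $[k,10]$ for $k\leq 10$, so every $\tau^*$-neighborhood of $4$ contains $10$. The set $A=[8,10]$ is $\tau^*$-open and contains $4+4=8$, yet every product neighborhood of $(4,4)$ contains $(10,10)$, and $10+10=20\notin A$: the output escapes over the top of $I$ exactly as you feared, and addition is not $\tau^*$-continuous. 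You are in good company here, for the paper's own proof elides the same step: after restricting the $\tau_0$-neighborhoods to $I$ it concludes, although the image is only guaranteed to lie in $W$, not in $W\cap I$.

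What your analysis does show is which extra hypotheses rescue the theorem, and these cover all of its uses in the paper. If $\tau_0\restrict I$ is discrete---as in theorem~\ref{Theorem.Making-points-isolated} and in every later application of blending, namely corollary~\ref{Corollary.Tau-restrict-b} as invoked with discrete $\tau$ and theorems~\ref{Theorem.Countable-metric-space-presentations}, \ref{Theorem.Dividing-by-2-discontinuous} and \ref{Theorem.Some-not-all-non-kappa-like}---then in the hard case one takes the singleton neighborhoods $\{x\}$, $\{y\}$ and there is nothing to check. Alternatively, if $I$ is closed under addition, then $x',y'\in I$ forces $x'\oplus y'\in I$ and your restriction argument for $\oplus$ closes immediately. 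Beware, though, that even closure of $I$ under both operations does not save multiplication: your appeal to the ``zero wrinkle'' of theorem~\ref{Theorem.Making-points-isolated} requires $M\times\{0\}$ to be open, that is, $\{0\}$ to be $\tau^*$-open, which can fail. For instance, with $\tau_0$ the final-digits topology on a countable nonstandard model, $\tau_1$ indiscrete, and $I$ the standard cut, the only $\tau^*$-neighborhood of a nonstandard $x$ is $M$ itself and every $\tau^*$-neighborhood of $0$ contains a nonzero standard number, so no neighborhood of $(x,0)$ multiplies into the $\tau^*$-open set $U_{00}\cap I$ containing $x\cdot 0=0$, even though $I$ is closed under $+$ and $\cdot$. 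So a corrected statement should assume either that $\tau_0\restrict I$ is discrete, or that $I$ is closed under the operations and $\{0\}$ is open in $\tau_0\restrict I$.
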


\begin{proof}
If $x+y\in U\union A$, then we break into cases. If $x+y\in U$, then we can find a neighborhood of $(x,y)$ using the $\tau_1$ topology. Otherwise $x+y\in A=W\intersect I$ for some $W\in\tau_0$. In particular, $x+y\in I$ and so also $x,y\in I$. Thus, we can find a neighborhood of $(x,y)$ using the $\tau_0$ topology, and then restrict to the subspace $I$, which will give a neighborhood in the $\tau^*$ product topology. A similar argument works with $xy$, where again we need to pay attention to the case $xy=0$ separately.
\end{proof}

\begin{corollary}\label{Corollary.Tau-restrict-b}
 If $M$ is a topological model of $\PA^-$ with topology $\tau$, then for any initial segment $I\of M$, the model $M$ is also continuous with respect to the topology $\tau\restrict I$, whose open sets are of the form $U\intersect I$, where $U$ is open with respect to $\tau$, plus $M$ itself.
\end{corollary}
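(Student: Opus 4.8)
The plan is to obtain this as a special case of the topology-blending theorem (Theorem~\ref{Theorem.Topology-blending}), by feeding it the right two topologies. Specifically, I would apply that theorem with $\tau_0=\tau$, the given topology on $M$, and with $\tau_1$ the indiscrete topology $\set{\emptyset,M}$ on $M$.

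First I would check the three hypotheses. Continuity of $M$ with respect to $\tau_0=\tau$ holds by assumption, since $M$ is a topological model of arithmetic. Continuity with respect to the indiscrete topology $\tau_1$ holds by statement~(1) of Theorem~\ref{Theorem.Discrete-indiscrete-initial-segment-final-segment}. Finally, $\tau_0$ refines $\tau_1$ automatically, as the indiscrete topology is the coarsest topology on any set. Thus the blending theorem applies to the given initial segment $I\of M$ and yields that $M$ is continuous with respect to the blended topology $\tau^*$.

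The remaining work is simply to identify $\tau^*$ with the topology $\tau\restrict I$ of the statement. By definition $\tau^*$ is generated by the sets $U\union A$, where $U\in\tau_1$ and $A$ is a $\tau_0$-open subset of $I$ in the subspace topology. Since $\tau_1=\set{\emptyset,M}$, taking $U=\emptyset$ yields exactly the subspace-open subsets of $I$, namely the sets $V\intersect I$ for $V\in\tau$, while taking $U=M$ yields only $M$ itself. Hence $\tau^*$ is generated by the family $\set{V\intersect I\st V\in\tau}\union\set{M}$, which is precisely the family of sets described in the corollary.

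The only point needing a line of verification---and this is the sole, minor, obstacle---is that this generating family is already a topology, so that ``generated by'' adds nothing. This is immediate: the subspace-open subsets of $I$ are closed under finite intersection and arbitrary union among themselves, since $(V_1\intersect I)\intersect(V_2\intersect I)=(V_1\intersect V_2)\intersect I$ and unions behave likewise, while adjoining $M$ changes nothing under these operations, because $M$ absorbs everything in a union and acts as the identity in an intersection with any such set. Therefore $\tau^*=\tau\restrict I$, and the continuity delivered by the blending theorem is exactly continuity with respect to $\tau\restrict I$, as desired.
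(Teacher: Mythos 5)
Your proof is correct and takes essentially the same approach as the paper, which likewise derives the corollary as an instance of theorem~\ref{Theorem.Topology-blending} by blending the indiscrete topology (as $\tau_1$) with the given topology $\tau$ (as $\tau_0$) on the initial segment $I$. Your explicit identification of the generated topology $\tau^*$ with $\tau\restrict I$, including the check that the generating family is already closed under unions and intersections, merely fills in details the paper's one-line proof leaves implicit.
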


\begin{proof}
The topology $\tau\restrict I$ is an instance of theorem~\ref{Theorem.Topology-blending}, blending the indiscrete topology with the topology $\tau$ on $I$.
\end{proof}

\begin{remark} \label{Remark on ordered rings}
\rm{
Under the topology $\tau\restrict I$, the initial segment $I$ appears the same as its subspace topology with respect to $\tau$, but the points above $I$ in the order of $M$ become indiscrete. This method can be used even in the case $I=[0,b]$, where the initial segment has a maximal element, in which case we denote $\tau\restrict [0,b]$ by $\tau\restrict b$. Because of these topology-blending constructions, it follows that topological models of arithmetic need not be homogeneous. This makes the case of topological models of arithmetic very different from the related case of topological ordered rings, such as the integers $\Z^M$ of a model of arithmetic $M$, as these are always homogeneous: the translation maps $x\mapsto x+k$ are homeomorphisms mapping any point to any other desired point (the inverse map is continuous, since it is translation by $-k$). In particular, in such a topological ordered ring, if one point is isolated, then all points are isolated. It follows that if $\<Z,+,\cdot>$ is a countable ordered ring, whose addition $+$ is continuous with respect to an underlying metric topology, then the underlying space is homeomorphic either to $\N$ or $\Q$. This is because if there is an isolated point, then all points are isolated and the topology is discrete; and otherwise, there are no isolated points, and by the theorem of \Sierpinski\ we mentioned earlier, every countable metric space without isolated points is homeomorphic to $\Q$. With topological models of arithmetic, in contrast, one can achieve other countable metric spaces.
}
\end{remark}

\begin{theorem}\label{Theorem.Countable-metric-space-presentations}
Every countable nonstandard model of $\text{\rm I}\Delta_0$ has continuous presentations with respect to the following countable metric spaces:
\begin{enumerate}
  \item[\rm{(a)}] The rationals $\Q$.
  \item[\rm{(b)}] The rationals $\Q$, adjoined with finitely many isolated points.
  \item[\rm{(c)}] The rationals $\Q$, adjoined with infinitely many isolated points.
  \item[\rm{(d)}] The countable discrete metric space.
\end{enumerate}
\end{theorem}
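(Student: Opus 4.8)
The plan is to handle the four spaces separately, treating (1) and (4) as essentially immediate and concentrating the work on (2) and (3). For (1), I would simply invoke the main theorem (theorem~\ref{maintheorem}): the final-digits topology on a countable model is homeomorphic to $\Q$ and makes arithmetic continuous. For (4), the discrete topology works by theorem~\ref{Theorem.Discrete-indiscrete-initial-segment-final-segment}(1), and since $M$ is countable the discrete topology is induced by the discrete metric, so $M$ becomes the countable discrete metric space. The real content lies in (2) and (3), and the unifying idea is to start from the final-digits presentation (which is $\cong\Q$) and isolate an initial segment $I=[0,b]$. Note that by upward homogeneity (theorem~\ref{Theorem.Upward-homogeneity}) the isolated points of \emph{any} continuous presentation automatically form an initial segment, so an initial segment is exactly the right kind of set to isolate.

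For (2), I would take $b$ to be a standard number, so that $I=[0,b]$ is finite, and apply theorem~\ref{Theorem.Making-points-isolated} to obtain a continuous presentation $M^*$ in which every element of $I$ is isolated. Because the final-digits topology is Hausdorff, the finite set $I$ is closed, hence clopen in $M^*$, and its complement $F=M\setminus I$ retains its original subspace topology. Since $F$ is an infinite countable metric space with no isolated points (deleting finitely many points from a copy of $\Q$ cannot create isolated points), \Sierpinski's theorem gives $F\cong\Q$. Thus $M^*$ is the disjoint union of $\Q$ with the finitely many isolated points of $I$, and it is metrizable by the remark following theorem~\ref{Theorem.Making-points-isolated}.

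For (3), I would instead take $b$ \emph{nonstandard}, so that $I=[0,b]$ is countably infinite. Here the naive application of theorem~\ref{Theorem.Making-points-isolated} fails to produce a disjoint union: because every basic open set $U_s$ is cofinal in the order of $M$ (and its least element is a standard number, hence below $b$), the initial segment $I$ is actually \emph{dense} in the final-digits topology, so merely isolating its points leaves $I$ dense, and therefore not clopen, in $M^*$. The fix is to pass to the finer topology $\tau$ that is the topological disjoint union of the discrete space on $I$ and the subspace final-digits topology on $F=(b,\infty)$; concretely, the $\tau$-open sets are exactly the sets $A\cup V$ with $A\subseteq I$ arbitrary and $V$ open in the subspace topology of $F$. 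One checks directly that this is a topology (it is closed under finite intersections because $A_i\subseteq I$ and $V_i\subseteq F$ are disjoint) and that it makes both $I$ and $F$ clopen. Then $F$ is an infinite countable metric space with no isolated points, since each $U_s$ meets $F$ in a cofinal, hence infinite, set, so $F\cong\Q$ by \Sierpinski's theorem, while $I$ is a countably infinite discrete space; being a coproduct of two metrizable spaces, $\tau$ is metrizable, and $M$ with $\tau$ is $\Q$ adjoined with infinitely many isolated points.

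The crux, and the step I expect to be the main obstacle, is verifying that arithmetic remains continuous for the finer topology $\tau$ of case (3), since passing from theorem~\ref{Theorem.Making-points-isolated} to the genuinely clopen splitting strips the points of $I$ out of the neighborhoods of points of $F$. I would verify this by cases on where the output lands, exploiting that $I$ is an \emph{initial segment}. If $x+y\in I$, then $x,y\le x+y$ forces $x,y\in I$, so $(x,y)$ is isolated in the plane and continuity is trivial; if $x+y\in F$, then the output exceeds $b$, and the original continuity of addition supplies a product neighborhood which, intersected with $F$ in each coordinate lying in $F$ and kept a singleton in any coordinate lying in $I$, maps into the target, using that adding a nonnegative number to something above $b$ stays above $b$. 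Multiplication is handled the same way, the one genuine wrinkle being $xy=0$: there one factor is $0\in I$ while the other may lie in $F$, but $\{0\}\times M$ is $\tau$-open and maps to $\{0\}$, which settles it, and whenever $xy\in F$ neither factor is $0$ and both exceed the relevant bound, so the argument parallels addition. Once these cases are in place, the homeomorphism claims follow from \Sierpinski's theorem exactly as above.
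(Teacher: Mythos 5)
Your overall route is the paper's own: the published proof disposes of all four items in one line, citing the main theorem for (1) and the topology-blending construction of theorem~\ref{Theorem.Topology-blending} (of which theorem~\ref{Theorem.Making-points-isolated} is the relevant instance) to isolate an initial segment $[0,b]$, with $b$ standard for (2), $b$ nonstandard for (3), and $I=M$ for (4). Your treatment of (1), (2) and (4) matches this exactly, and your final construction in (3) is correct as well---but it rests on a false intermediate claim, and your ``fix'' is in fact a no-op.

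You assert that for nonstandard $b$ the initial segment $I=[0,b]$ is dense in the final-digits topology, ``because every basic open set $U_s$ is cofinal in the order of $M$ and its least element is a standard number.'' This reasoning considers only the sets $U_s$ for \emph{standard} finite binary strings $s$, but those do not form a basis for the topology used in the main theorem, which is the order topology of the final-digits order as computed \emph{inside} $M$. Its basis consists of the internal sets $U_s^M$ for binary strings $s$ in the sense of $M$, including strings of nonstandard length, whose least element $\val(s)$ can be nonstandard. (The topology generated by the standard $U_s$ alone is not even Hausdorff on a nonstandard model---it cannot separate $0$ from $2^c$ for nonstandard $c$---so it cannot be the copy of $\Q$.) Indeed, as the paper notes just after theorem~\ref{Theorem.Initial/final-segment-topology}, every final segment of the arithmetic order is \emph{already open} in the final-digits topology, since $U_s$ consists of numbers at least as large as the number represented by $s$. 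Hence $F=(b,\infty)$ is open, $I=[0,b]$ is closed---certainly not dense---and the space $M^*$ produced by theorem~\ref{Theorem.Making-points-isolated} is already the topological disjoint union of the discrete set $I$ with the subspace $F$. Your replacement topology $\tau$, with open sets $A\cup V$ for arbitrary $A\subseteq I$ and $V$ relatively open in $F$, is literally identical to the topology of $M^*$ (decompose any $U\cup A$ as $\bigl((U\cap I)\cup A\bigr)\cup(U\cap F)$, using that $F$ is open), so your direct continuity verification, though correct---including the careful handling of $xy=0$---re-proves what theorem~\ref{Theorem.Making-points-isolated} already supplies. One further simplification: that $F$ has no isolated points requires no cofinality computation with the $U_s$; it is an open subspace of a space homeomorphic to $\Q$, after which \Sierpinski's theorem applies as you say. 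With the density claim excised, your proof is essentially the paper's, with the details usefully spelled out.
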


\begin{proof}
The main theorem gives $\Q$, and then by theorem \ref{Theorem.Topology-blending} we can choose any finite or infinite initial segment $I$ of the model and arrange for all points in $I$ to be isolated. Note that by part (b) of Lemma \ref{Lemma.Upward-homogeneity}, the rest of the model is homeomorphic to $\Q$.
\end{proof}

Note that the standard model of arithmetic has no continuous presentation on $\Q$ adjoined with infinitely many isolated points, since those points would have to be unbounded in $\N$ and thus every point would have to be isolated, by lemma \ref{Lemma.Upward-homogeneity}.

\medskip

The underlying topology-blending fact, of course, is the following elementary observation of topology.

\begin{lemma}
 If a binary operation on a set $X$ is continuous with respect to topologies $\sigma$ and also with respect to $\tau$, then it is also continuous with respect to the topology generated by $\sigma\union\tau$.
\end{lemma}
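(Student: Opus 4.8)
The plan is to exploit the standard fact that continuity of a map \emph{into} a space carrying the topology generated by a subbasis need only be checked against the subbasic open sets. Write $\rho$ for the topology generated by $\sigma\union\tau$; then $\sigma\union\tau$ itself is a subbasis for $\rho$, so a subbasic $\rho$-open set is simply a member of $\sigma$ or a member of $\tau$. Writing $*$ for the binary operation and equipping $X\cross X$ with the product topology $\rho\cross\rho$, I would verify continuity of $*\colon (X\cross X,\rho\cross\rho)\to (X,\rho)$ by showing that $*^{\inverse}(W)$ is $(\rho\cross\rho)$-open for every $W\in\sigma\union\tau$; by the subbasis criterion this suffices.

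The one preliminary observation I would record is that the product topology $\rho\cross\rho$ refines both $\sigma\cross\sigma$ and $\tau\cross\tau$. This is immediate from $\sigma\of\rho$ and $\tau\of\rho$, since the defining subbasis $\{U\cross X,\ X\cross U : U\in\sigma\}$ of $\sigma\cross\sigma$ then sits inside the corresponding subbasis of $\rho\cross\rho$, and likewise for $\tau$. Only this one direction of refinement is needed for the argument.

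With these in hand the proof is a two-line case split. If $W\in\sigma$, then by continuity of $*$ with respect to $\sigma$ the preimage $*^{\inverse}(W)$ is open in $\sigma\cross\sigma$, hence open in the finer topology $\rho\cross\rho$. Symmetrically, if $W\in\tau$, then $*^{\inverse}(W)$ is open in $\tau\cross\tau$ and therefore open in $\rho\cross\rho$. Thus preimages of all subbasic $\rho$-open sets are open, and so $*$ is continuous with respect to $\rho$, as claimed.

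There is essentially no obstacle here; the only point that rewards a moment's care is the interaction between forming products and forming the join of topologies. This is exactly why I would isolate the refinement $\sigma\cross\sigma,\ \tau\cross\tau\of\rho\cross\rho$ as a separate observation, rather than attempting to manipulate the basic $\rho$-open sets $U\intersect V$ (with $U\in\sigma$, $V\in\tau$) directly: reducing to subbasic sets sidesteps any need to track finite intersections or to argue about how products of joins compare with joins of products.
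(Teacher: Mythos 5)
Your proof is correct, and it takes a slightly different route from the paper's. The paper argues pointwise on the \emph{basic} open sets of the join: given $x\star y\in U\intersect W$ with $U\in\sigma$ and $W\in\tau$, it takes a $\sigma$-product neighborhood of $(x,y)$ mapping into $U$ and a $\tau$-product neighborhood mapping into $W$, and intersects them to obtain a neighborhood in the joined product topology whose image lies in $U\intersect W$. You instead work globally with preimages and reduce to \emph{subbasic} sets: since $\sigma\union\tau$ is a subbasis for the join $\rho$, the subbasis criterion lets you check only that $\star^{\inverse}(W)$ is $(\rho\cross\rho)$-open for $W\in\sigma\union\tau$, and this follows at once from your (correct) preliminary observation that $\rho\cross\rho$ refines both $\sigma\cross\sigma$ and $\tau\cross\tau$. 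What your packaging buys is that all finite intersections are absorbed into the subbasis criterion, so the neighborhood-intersection step disappears entirely and the interaction between products and joins is isolated in a single clean refinement statement; what the paper's pointwise formulation buys is an explicit construction of the combined neighborhoods, matching the local style of continuity verification used in the surrounding results (such as the topology-blending theorem, where the same intersection move is performed by hand). Both arguments are elementary and establish the lemma in full.
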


\begin{proof}
Suppose that $\star$ is a binary operation on $X$, continuous with respect both to $\sigma$ and $\tau$ separately, meaning that the corresponding product topology is used for $X^2$, the domain of the operation. The topology generated by $\sigma\union\tau$ has basic open sets of the form $U\intersect W$, where $U\in\sigma$ and $W\in\tau$.  If $x\star y\in U\intersect W$, then we can find open neighborhoods of $(x,y)$ in the $\sigma$ topology on $X^2$ and in the $\tau$ topology on $X^2$, which by intersection will provide an open neighborhood in the combined topology, whose image under $\star$ is contained in $U\intersect W$.
\end{proof}

For example, we may combine any given topology with the initial-segment topology, or with the final-segment topology, and produce a new topological model of arithmetic.

\begin{theorem}\label{Theorem.Initial/final-segment-topology}
 If $M$ is a topological model of $\PA^-$, then so is $M^*$, which is the same model, but with a new topology, generated from the old by taking all initial segments of $M$ also as open sets. The same applies to $M^{**}$, whose topology is generated by the topology of $M$ and the final segments of $M$.
\end{theorem}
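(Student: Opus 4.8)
The plan is to derive the result immediately from the preceding lemma together with Theorem~\ref{Theorem.Discrete-indiscrete-initial-segment-final-segment}. Write $\sigma$ for the given topology of $M$. First I would observe that the collection $\tau$ of all initial segments of $M$ is already closed under arbitrary unions and intersections, and contains both $\emptyset$ and $M$, so it is itself a topology on $M$---precisely the initial-segment topology of Theorem~\ref{Theorem.Discrete-indiscrete-initial-segment-final-segment}(3). Consequently the topology described in the statement, obtained from $\sigma$ by declaring every initial segment also to be open, coincides with the topology generated by $\sigma\union\tau$.

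Next I would invoke Theorem~\ref{Theorem.Discrete-indiscrete-initial-segment-final-segment}(3), which guarantees that both $\oplus$ and $\otimes$ are continuous with respect to $\tau$. Since $M$ is by hypothesis a topological model of arithmetic, these operations are also continuous with respect to $\sigma$. The foregoing lemma therefore applies directly: a binary operation continuous in each of two topologies is continuous in the topology generated by their union. Applying the lemma to addition and to multiplication in turn shows that $M^*$, carrying the topology generated by $\sigma\union\tau$, is again a topological model of arithmetic.

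For $M^{**}$ the argument is identical, replacing the initial-segment topology by the final-segment topology and citing Theorem~\ref{Theorem.Discrete-indiscrete-initial-segment-final-segment}(4) in place of part~(3). Here too the final segments are closed under arbitrary unions and intersections, so they form a topology, and the augmented topology is exactly the one generated by the union of $\sigma$ with the final-segment topology; the lemma then yields continuity of $\oplus$ and $\otimes$.

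There is no genuine obstacle in this proof: the substantive content has already been isolated, on the one hand in the lemma, and on the other in Theorem~\ref{Theorem.Discrete-indiscrete-initial-segment-final-segment}. The only point requiring a moment's care is the bookkeeping identification of the augmented topology with the topology generated by $\sigma\union\tau$, which rests on the elementary observation that the initial (respectively final) segments are themselves closed under the relevant topological operations.
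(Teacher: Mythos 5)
Your proof is correct and matches the paper's own reasoning exactly: the paper presents this theorem as an immediate consequence of the preceding lemma on combining topologies together with parts (3) and (4) of theorem~\ref{Theorem.Discrete-indiscrete-initial-segment-final-segment}, which is precisely your argument. Your additional bookkeeping—that the initial (respectively final) segments themselves form a topology, so the augmented topology is generated by the union of the two topologies—is a sound and worthwhile check that the lemma applies.
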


Note that the final-digits topology on a model of arithmetic $M$, as in the main theorem, already makes all final-segments open, since for any $n$ with binary representation $s$, the basic open set $U_s$ consists of numbers all at least as large as $n$. So if one augments $M$ with the initial segments to form $M^*$, then every point will become isolated.

Let us discuss the issue in the proof of theorem~\ref{Lemma.Pairing-function} about using the Cantor bijective pairing function
$$p(n,m)=\frac 12(n+m)(n+m+1) +m,$$
instead of its double. If dividing by $2$ were continuous, then indeed we could use the bijective pairing function, and $M^2$ would be continuously bijective with $M$. But not every topological model of arithmetic has dividing-by-$2$ as a continuous function.

\goodbreak

\begin{theorem}\label{Theorem.Dividing-by-2-discontinuous} Every model $M$ of $\PA^-$ carries a topology $\tau_{M}$ that continuously supports $M$ in which dividing by $2$ is not a continuous function on its even numbers, and also in which the predecessor function $x\mapsto x-1$ is not continuous on the positive numbers. Moreover, if $M$ is a countable model of $\mathrm{I}\Delta_0$, then $\tau_{M}$  can be arranged to be metrizable.

\end{theorem}

\begin{proof}
Let $M$ be any model of arithmetic. Consider this model under the topology $\tau_{M} = \tau\restrict 17$, where $\tau$ is the discrete topology. So $M$ has the discrete topology up to and including the number $17$, but above $17$ the topology is indiscrete. By corollary~\ref{Corollary.Tau-restrict-b}, this is a topological model of arithmetic.

If dividing by $2$ were a continuous function on the even numbers of $M$, then since $30$ gets mapped to $15$, which is isolated, it would mean that $30$ is isolated in the subspace topology of the even numbers. But the only open set containing $30$ also contains all the numbers above $17$, and so it isn't isolated in the even numbers. Contradiction.

A similar argument works with the predecessor function. Since $18-1=17$, if this function were continuous on the positive numbers, then $18$ would have to be isolated in the subspace of positive numbers, but it isn't.

In order to demonstrate the moreover clause, recall that by the proof of theorem \ref{Theorem.Countable-metric-space-presentations} there is a model $M$ of $\mathrm{I}\Delta_0$ that carries a metrizable topology $\tau_{M}$ that continuously supports $M$ and such that the subspace consisting of numbers less than or equal to 17 carries the discrete topology, but the rest of the space is homeomorphic to $\Q$. An argument similar to the one used to prove the first part of the theorem shows that when $M$ is equipped with $\tau_{M}$, the map $x\mapsto 2x$ is not an open map, and neither is the successor function $x\mapsto x+1$, since open maps would map isolated points to isolated points. Therefore the dividing-by-$2$ and predecessor functions are discontinuous when the underlying topology is $\tau_{M}$.

\end{proof}

\begin{question}
 Is there a topological model of arithmetic $M$ having no continuous bijection between $M^2$ and $M$?
\end{question}

Note that subtraction and division are continuous with respect to the final-digits topology, in any model of arithmetic, by an analogue of the school-child's observation in the proof of the theorem~\ref{maintheorem}.

Similar ideas as in theorem~\ref{Theorem.Dividing-by-2-discontinuous} provide a negative answer to question~\ref{Question.If-one-then-all?}. Recall that a linearly ordered structure $M$ is \emph{$\kappa$-like}, if the cardinality of $M$ is $\kappa$, but the cardinality of every proper initial segment of $M$ is less than $\kappa$.  It is well-known that every countable model of $\PA$ has a $\kappa$-like elementary extension for every infinite cardinal $\kappa$.

\begin{theorem}\label{Theorem.Some-not-all-non-kappa-like}
 In every infinite cardinality $\kappa$, there is a topological space $X$ of that size continuously supporting exactly the models of arithmetic of size $\kappa$ that are not $\kappa$-like. Thus, $X$ continuously supports some, but not all models of arithmetic of this size.
\end{theorem}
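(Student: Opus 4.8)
The plan is to begin by putting the hypothesis into workable combinatorial form. A model $M$ of size $\kappa$ fails to be $\kappa$-like precisely when it has a proper initial segment of size $\kappa$, and this is equivalent to the existence of an element $b\in M$ whose set of predecessors $\{a : a<b\}$ has size $\kappa$: if $I$ is a proper initial segment with $|I|=\kappa$ and $b\notin I$, then by downward closure every $a\in I$ satisfies $a<b$, so $I\subseteq\{a:a<b\}$. With this reformulation in hand, I would build the space $X$ by the topology-blending method already exploited in theorem~\ref{Theorem.Dividing-by-2-discontinuous}, generalizing the cutoff there from the finite value $17$ to an initial segment of size $\kappa$. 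Concretely, fix a single non-$\kappa$-like model $N$ of size $\kappa$ together with a proper initial segment $J\subseteq N$ with $|J|=\kappa$, and equip $N$ with the blended topology $\tau\restrict J$ of corollary~\ref{Corollary.Tau-restrict-b}, taking $\tau$ discrete. This makes each point of $J$ isolated while every point above $J$ becomes non-isolated, so that $X$ has exactly $\kappa$ isolated points and at least one non-isolated point.

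The forbidding half—that no $\kappa$-like model is continuously presentable on $X$—is clean, and I would establish it first. By theorem~\ref{Theorem.Upward-homogeneity} the isolated points of any topological model of arithmetic are downward closed, hence always form an initial segment. If a model $M$ of size $\kappa$ were continuously presented on $X$, the homeomorphism would carry the isolated points of $M$ bijectively onto the $\kappa$ isolated points of $X$, so $M$ would have an initial segment $I$ consisting of isolated points with $|I|=\kappa$; and since $X$ has a non-isolated point, $I\neq M$, so $I$ is a \emph{proper} initial segment of size $\kappa$. Thus $M$ is not $\kappa$-like. Applied to $N$ itself (which is continuously presented on $X$ by construction), the same space exhibits at least one non-$\kappa$-like model, so this already delivers the headline conclusion that $X$ supports some but not all models of size $\kappa$.

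The main obstacle is the converse inclusion, that \emph{every} non-$\kappa$-like model of size $\kappa$ is continuously presentable on the \emph{single} space $X$. One cannot simply isolate a size-$\kappa$ initial segment of the given model and declare victory, because the number of non-isolated points is a homeomorphism invariant equal to the cardinality of the complementary tail, and this cardinality is genuinely non-uniform across non-$\kappa$-like models. For instance, a model of order type $\N+\Z\cdot(L_0+C)$, with $L_0$ a $\kappa$-like dense order and $C$ countable, admits a size-$\kappa$ initial segment only with a small complement, whereas a model of order type $\N+\Z\cdot\eta_\alpha$ forces every size-$\kappa$ initial segment to have complement of full size $\kappa$. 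Hence no fixed ``size-$\kappa$ isolated segment plus homogeneous indiscrete tail'' space can receive all non-$\kappa$-like models at once, and this tension is exactly where the real work lies.

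To overcome it I would redesign the non-isolated part of $X$ rather than take it indiscrete: the idea is to use the additional freedom in theorem~\ref{Theorem.Topology-blending} to place a more rigid topology on the complement so that each non-$\kappa$-like model's tail can be matched to one fixed universal non-isolated part, while the forbidding argument of the second paragraph is preserved by keeping $\kappa$ isolated points present. The decisive step, which I expect to absorb essentially all of the effort, is to pin down this universal non-isolated structure and to verify both that arithmetic remains continuous under the blend and that a $\kappa$-like model can never replicate it—so that membership in the class of spaces homeomorphic to $X$ ends up being equivalent to possessing a proper initial segment of size $\kappa$.
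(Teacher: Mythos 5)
Your construction and your forbidding direction coincide with the paper's own proof: the space is $\kappa$ isolated points together with a size-$\kappa$ block whose points have no neighborhood except the whole space, obtained as $\tau\restrict I$ with $\tau$ discrete via corollary~\ref{Corollary.Tau-restrict-b}, and $\kappa$-like models are excluded because the isolated points form an initial segment by theorem~\ref{Theorem.Upward-homogeneity}. The genuine gap is in the positive direction, where you stop short---and the obstacle that stops you is illusory. Your claim that the cardinality of the tail above a size-$\kappa$ initial segment is ``genuinely non-uniform'' across non-$\kappa$-like models is false for models of arithmetic: in any model of $\PA^-$, addition is cancellative, so for any element $b$ the translation $x\mapsto b+x$ injects the whole model into the final segment $[b,\infty)$; hence \emph{every} tail above an element has full cardinality $\kappa$. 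In particular, your proposed counterexample of order type $\N+\Z\cdot(L_0+C)$ with $C$ countable is not the order type of any model of arithmetic of size $\kappa$: a point $b$ lying in the $C$-part would have a tail of size less than $\kappa$, contradicting the injection.

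With this uniformity in hand, the simple space already works universally, and this is exactly how the paper concludes: given any non-$\kappa$-like $M$ of size $\kappa$, pick $b$ with $\kappa$ many predecessors and give $M$ the topology $\tau\restrict b$ with $\tau$ discrete; this has exactly $\kappa$ isolated points and a non-isolated block of exactly $\kappa$ points (by the translation injection), and any two such spaces are homeomorphic via any bijection matching isolated parts to isolated parts and non-isolated parts to non-isolated parts, since the open sets are precisely the subsets of the isolated part together with the whole space. So your final paragraph---redesigning the non-isolated part and searching for a ``universal non-isolated structure''---is unnecessary, and since it is also the entirely unexecuted heart of your argument, the proposal as written does not establish that every non-$\kappa$-like model of size $\kappa$ is continuously presentable on the single space $X$.
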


\begin{proof}
Let $X$ be a space consisting of the indiscrete topology on a set of size $\kappa$, augmented with $\kappa$ many additional isolated points. If $M$ is a model of arithmetic of size $\kappa$, then $M$ is continuous with respect to the topology $\tau\restrict b$, where $\tau$ is the discrete topology and $b\in M$. If $M$ is not $\kappa$-like, then it has an element $b$ having $\kappa$ many predecessors, in which case $\tau\restrict b$ has $\kappa$ many isolated points and an indiscrete block of $\kappa$ many additional points, making the space homeomorphic to $X$. But if the model $M$ is $\kappa$-like, then it can have no continuous presentation on $X$, for in this case, the isolated points would have to be unbounded in $M$, and thus by lemma~\ref{Lemma.Upward-homogeneity} it would follow that every point would have to be isolated, contrary to the nature of $X$. So $X$ continuously supports exactly the non-$\kappa$-like models of arithmetic.
\end{proof}

We can prove a weak dual to this result.

\begin{theorem}\label{Theorem.kappa-like}
 For every infinite cardinal $\kappa$, there is a topological space $Y$ supporting a $\kappa$-like model, such that furthermore, only $\kappa$-like models are continuously presentable upon $Y$.
\end{theorem}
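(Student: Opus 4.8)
The plan is to take $Y$ to be a fixed $\kappa$-like model $M_0$ of arithmetic equipped with its \emph{initial-segment topology}, whose open sets are the initial segments of $M_0$. Such a model exists for every infinite $\kappa$ (for $\kappa=\omega$ it is the standard model, and for uncountable $\kappa$ one takes a $\kappa$-like elementary extension of a countable model). By Theorem~\ref{Theorem.Discrete-indiscrete-initial-segment-final-segment}(3) the model $M_0$ is continuous with respect to this topology, so $Y$ already supports a $\kappa$-like model; the whole content of the theorem is thus to show that \emph{every} model of arithmetic continuously presented on $Y$ is $\kappa$-like. In fact I expect to prove the sharper statement that every such model is order-isomorphic to $M_0$, which is exactly why the result is only a \emph{weak} dual: the space $Y$ pins down not merely $\kappa$-likeness but the full order type of $M_0$, and so it cannot carry all $\kappa$-like models at once.

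The first step is to recognize the initial-segment topology as the Alexandrov topology of the order $\leq_{M_0}$: arbitrary unions and intersections of initial segments are again initial segments, and the least open set containing a point $x$ is $\{y : y \leq_{M_0} x\}$. For Alexandrov spaces a map is continuous precisely when it is monotone for the specialization order, and a product of such spaces is the Alexandrov space of the product order. Hence, writing $\preceq\, =\, \leq_{M_0}$ and identifying the universe of a presented model $M$ with the points of $Y$, the hypothesis that $\oplus$ and $\otimes$ are continuous is equivalent to their being jointly monotone for $\preceq$: if $x \preceq x'$ and $y \preceq y'$ then $x \oplus y \preceq x' \oplus y'$, and likewise for $\otimes$.

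The key step, and the main obstacle, is then a rigidity lemma: if $\preceq$ is any linear order on the universe of a model of arithmetic $M$ (here $\PA^-$ suffices) for which $\oplus$ and $\otimes$ are jointly monotone, then $\preceq$ is either $\leq_M$ or its reverse $\geq_M$. I would prove this by first forcing the additive identity $0$ to be an endpoint of $\preceq$. Indeed, if there were $u \prec 0 \prec v$, then applying monotonicity to the pair $u \preceq 0$, $v \preceq v$ gives $u \otimes v \preceq 0 \otimes v = 0$, while applying it to $u \preceq u$, $0 \preceq v$ gives $0 = u \otimes 0 \preceq u \otimes v$; together these yield $u \otimes v = 0$, contradicting the absence of zero divisors. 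So $0$ is $\preceq$-least or $\preceq$-greatest. In the first case, translation-invariance of $\preceq$ (coming from additive monotonicity) together with cancellation shows $a \preceq b$ iff $a \leq_M b$: if $b = a + c$ then $0 \preceq c$ gives $a \preceq b$, and conversely $b <_M a$ would give $b \prec a$. The second case is symmetric and yields $\geq_M$.

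Finally I would combine the lemma with an order-type count. Since $M$ is presented on $Y$, the order $\preceq\, =\, \leq_{M_0}$ has order type $\ot(M_0)$, which is $\kappa$-like, has a least element, and---being the order type of a model of arithmetic---has no greatest element. By the rigidity lemma this order type equals $\ot(M,\leq_M)$ or $\ot(M,\geq_M)$; but the reverse order $\geq_M$ has a greatest element (namely $0$) and no least, so it cannot be isomorphic to $\ot(M_0)$. Hence $\ot(M,\leq_M)=\ot(M_0)$, so $M\cong M_0$ as ordered sets and in particular $M$ is $\kappa$-like. The part I expect to require the most care is the rigidity lemma itself---specifically, checking that the two monotonicity instances are legitimate (they use only reflexivity of $\preceq$ together with the fixed comparisons $u \prec 0 \prec v$) and that cancellation and integrality are already available in $\PA^-$; granting these, the rest is bookkeeping.
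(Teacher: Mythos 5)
Your proof is correct, and it takes a genuinely different route from the paper's. The paper argues by cardinality: if $N$ is presented on the initial-segment topology of a $\kappa$-like $M_0$ but is not $\kappa$-like, it picks $b$ with $\kappa$ many predecessors and a proper open $U\ni b$ (necessarily of size less than $\kappa$), and for each of $\kappa$ many predecessors $a\notin U$ with $a\oplus k_a=b$, uses continuity of $\oplus$ in one coordinate plus the fact that the open sets are linearly ordered by inclusion to conclude $U\subseteq W$ for the witnessing neighborhood $W\ni a$, hence $b\oplus k_a\in U$, pumping $\kappa$ many distinct elements into $U$. Your route instead exploits the Alexandrov character of the topology (the minimal neighborhoods $[0,x]$, and the product of finitely many Alexandrov spaces being Alexandrov for the product order, are both correct), reducing continuity to joint monotonicity, and then proves an algebraic rigidity lemma whose steps all check out in $\PA^-$: the two multiplicative monotonicity instances force $u\otimes v=0$ when $u\prec 0\prec v$, impossible since nonzero elements are positive and products of positives are positive; the translation argument correctly upgrades $\leq_N\,\subseteq\,\preceq$ (or its reverse) to equality of linear orders; and the reversed case is excluded by endpoints. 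What your approach buys is strictly more than the theorem asks: the presented model's order literally coincides with $\leq_{M_0}$. Indeed, the paper concedes that its proof ``does not quite show for uncountable $\kappa$ that the order of $M$ and $N$ agree'' and poses exactly this as the open question following corollary~\ref{Corollary.X-with-only-standard-model-presented}; your rigidity lemma answers it affirmatively, and for arbitrary models $M$, not only $\kappa$-like ones. What the paper's approach buys is a weaker hypothesis: it needs only continuity of addition, separately and in a single coordinate, whereas your endpoint step genuinely needs multiplication as well (with addition alone, monotonicity yields only $u\preceq u\oplus v\preceq v$, no contradiction)---though both hypotheses are of course available under the theorem's assumption of a continuous presentation.
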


\begin{proof}
Let $M$ be any $\kappa$-like model, and let $Y$ be the initial-segment topology of $M$. Suppose that $N$ is continuously presented on this same topology. If $N$ is not $\kappa$-like, then it has a point $b$ with $\kappa$ many predecessors $a<b$ in $N$. Let $U$ be a neighborhood of $b$, which is not the whole space. It follows that $U$ has size less than $\kappa$, and therefore it omits $\kappa$ many $a<b$. For any such $a$, there is $k_a$ such that $a+k_a=b$, and all such $k_a$ are distinct. By continuity, there is an open neighborhood $W$ of $a$ such that $a'+k_a\in U$ for all $a'\in W$. Since the open sets of $X$ are linearly ordered, $a\in W$ and $a\notin U$, it follows that $U\of W$ and in particular, $b\in W$. Thus, $b+k_a\in U$ for all these $a$, which places $\kappa$ many elements into $U$, contrary to assumption.
\end{proof}

Thus, we have two topological spaces of size $\kappa$, namely the space $X$ of theorem~\ref{Theorem.Some-not-all-non-kappa-like} and the space $Y$ of theorem~\ref{Theorem.kappa-like}, each of which supports a topological model of arithmetic, but no model is continuously presented upon both of them. In this sense, the topology can make distinctions in the arithmetic structure. In the case $\kappa=\omega$, this provides a negative answer to question \ref{Question.Topology-cannot-distinguish-the-theory}, since there is only one $\omega$-like model of arithmetic: the standard model.

\begin{corollary}\label{Corollary.X-with-only-standard-model-presented}
 There is a countable topological space $X$, such that only the standard model of arithmetic $N$ is continuously presented upon $X$, and furthermore this model has a unique such presentation. In particular, this answers question \ref{Question.Topology-cannot-distinguish-the-theory} negatively.
\end{corollary}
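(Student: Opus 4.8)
The plan is to take $X$ to be the space $Y$ furnished by theorem~\ref{Theorem.kappa-like} in the special case $\kappa=\omega$, namely the initial-segment topology on the standard model $\N$ itself, whose open sets are exactly the initial segments of $\N$ and hence form a chain $\emptyset\subset\{0\}\subset\{0,1\}\subset\{0,1,2\}\subset\cdots$ together with $\N$. First I would record the ``only the standard model'' half: by theorem~\ref{Theorem.kappa-like} every model continuously presented on $Y$ is $\omega$-like, and the only $\omega$-like model of arithmetic is the standard model $\N$, since being $\omega$-like means every element has only finitely many predecessors, which is precisely the characterization of $\N$. Conversely $\N$ genuinely is presented on $Y$, because the initial-segment topology makes arithmetic continuous by theorem~\ref{Theorem.Discrete-indiscrete-initial-segment-final-segment}(3). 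Thus $X=Y$ is a countable space on which exactly the standard model is continuously presentable.

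For the uniqueness of the presentation, the key observation is that $X$ secretly carries a canonical copy of the order of $\N$. Since the open sets form a chain, every point $p$ has a smallest open neighborhood $N_p$, and $p\mapsto\Card{N_p}$ is a bijection of $X$ onto $\{1,2,3,\dots\}$; I would define the \emph{topological order} $p\sqsubseteq q$ to mean $N_p\subseteq N_q$, an order of type $\omega$ whose least element is the unique isolated point. The goal is then to show that for any continuous $\oplus,\otimes$ making $\<X,\oplus,\otimes>$ a model of arithmetic, the (definable) order $<^M$ of the model coincides with $\sqsubseteq$. Once this is established, the order-isomorphism from $\<X,\oplus,\otimes>$ to $\N$ is forced to be the canonical labeling $p\mapsto\Card{N_p}-1$, and since the operations are the transport of the standard $+$ and $\cdot$ along this fixed labeling, they too are forced; this is exactly the asserted uniqueness.

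The crux is a continuity argument about the successor map $S\colon x\mapsto x\oplus 1$. First I would note that $S$ is continuous, so pulling back the minimal neighborhood of $S(p)$ yields $S(N_p)\subseteq N_{S(p)}$; read off the definition of $\sqsubseteq$, this says $S$ is $\sqsubseteq$-monotone, and since $S$ is injective it is in fact strictly $\sqsubseteq$-increasing. Now $S$ is a bijection of $X$ onto $X$ minus a single point, namely the model's zero. A strictly $\sqsubseteq$-increasing injection of a copy of $\omega$ onto a subset missing exactly one point must omit the $\sqsubseteq$-least point $p_0$, for otherwise it would fix $p_0$, contradicting that successor has no fixed point; it is then forced to be the shift $p_i\mapsto p_{i+1}$ along the $\sqsubseteq$-enumeration $p_0\sqsubset p_1\sqsubset\cdots$ of $X$. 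Consequently the model's zero is $p_0$ and, by induction, $n^M=p_n$ for every $n$, so $<^M$ agrees with $\sqsubseteq$, as required. (Alternatively, one can first pin down $0^M$ as the unique isolated point via theorem~\ref{Theorem.Upward-homogeneity}.)

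I expect the uniqueness half to be the main obstacle, and within it the genuinely load-bearing step is extracting the model's order from the bare topology---that is, recognizing that the minimal-neighborhood sizes canonically reconstruct $\sqsubseteq$, and that continuity of the successor map forces $<^M$ to agree with it. The ``only standard model'' half follows directly from theorem~\ref{Theorem.kappa-like}, and the final negative answer to question~\ref{Question.Topology-cannot-distinguish-the-theory} is then immediate, since the standard model is the unique $\omega$-like model and this space supports it alone.
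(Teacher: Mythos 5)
Your proposal is correct, and its first half---taking $X$ to be the initial-segment topology on the standard model $\N$, invoking theorem~\ref{Theorem.kappa-like} at $\kappa=\omega$ to exclude all nonstandard models, and citing the continuity of arithmetic in the initial-segment topology (theorem~\ref{Theorem.Discrete-indiscrete-initial-segment-final-segment}(3), equivalent to the paper's citation of theorem~\ref{Theorem.Initial/final-segment-topology}) for presentability of $\N$---is exactly the paper's argument. Where you genuinely diverge is in the uniqueness half. The paper proceeds pointwise: it pins $0^M$ at the unique isolated point via theorem~\ref{Theorem.Upward-homogeneity}, then shows $1^M=1^\N$ by a local continuity argument---translation by $1^M$ must be constantly $1^\N$ on some open $W$, injectivity forces $\Card{W}=1$, and $\set{0}$ is the only open singleton---and then continues ``similarly by induction'' through the numerals. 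You instead prove one global lemma: since the open sets form a chain, every point has a minimal neighborhood $N_p$, continuity yields $S(N_p)\subseteq N_{S(p)}$, hence the successor map is strictly increasing for the minimal-neighborhood order $\sqsubseteq$ (of type $\omega$); and a strictly increasing injection of an $\omega$-order onto its complement-of-a-single-point must omit the least point (else $S(p)\sqsupseteq p$ would force a fixed point) and must therefore be the shift $p_i\mapsto p_{i+1}$, giving $n^M=p_n$ for all $n$ in one stroke. Every ingredient is available in $\PA^-$ (successor is injective, fixed-point-free, and hits every nonzero element), so the argument is sound. Your route is arguably cleaner and more conceptual: the monotonicity step is really the general Alexandrov-type fact that continuous maps on spaces with minimal neighborhoods are monotone for the induced order, and since the numerals $p_n$ exhaust the universe, your uniqueness argument by itself re-derives the ``only the standard model'' conclusion without needing theorem~\ref{Theorem.kappa-like}---a redundancy the paper's step-by-step induction shares but which your packaging makes visible.
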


\begin{proof}
Let $X$ be the initial-segment topology on the standard model $\N$, which is continuously presented upon $X$ by theorem \ref{Theorem.Initial/final-segment-topology}. The standard model $\N$ is $\omega$-like and it is up to isomorphism the only $\omega$-like model of arithmetic. Since theorem \ref{Theorem.kappa-like} shows that only $\omega$-like models have a continuous presentation upon $X$, it follows that only $\N$ admits such a presentation. In particular, all models presented upon $X$ have the same theory (true arithmetic), which provides a strong negative answer to question \ref{Question.Topology-cannot-distinguish-the-theory}.

Let us show that the presentation is unique. Suppose that $M=\<\N,\oplus,\otimes>$ is a model of arithmetic that is continuous with respect to the initial-segment topology on $\N$. Since $0^\N$ is the only isolated point, it follows from lemma \ref{Lemma.Upward-homogeneity} that $0^M=0^\N$. Since $a\oplus1^M=1^\N$ for some $a$ and $\set{0,1^\N}$ is open, there must be an open set $W$ such that $w\oplus 1^M\in\set{0,1^\N}$ for all $w\in W$. But $w\oplus 1^M=0$ is impossible, since $0^M=0^\N$, and so $w\oplus 1^M=1^\N$ for all $w\in W$. Therefore $W$ must have only one element, so $W=\set{0}$, the only open set with exactly one element. This shows that $a=0$ and therefore $1^M=1^\N$. Continuing similarly by induction, one sees that $M$ agrees completely with the standard model, and thus the presentation is unique.
\end{proof}

The arguments of theorem~\ref{Theorem.kappa-like} and corollary~\ref{Corollary.X-with-only-standard-model-presented} suggest that perhaps the initial segment topology can detect the order type of any model of arithmetic continuously presented upon it, and so we ask the following question.

\begin{question}
 If $M$ is a model of arithmetic, and $N$ is a model continuously presented with respect to the initial-segment topology of $M$, then must $M$ and $N$ have the same order type? Is this true for $\kappa$-like models $M$?
\end{question}

The final argument of corollary \ref{Corollary.X-with-only-standard-model-presented} shows the answer is affirmative when $\kappa=\omega$. More generally, the same argument shows that if $N$ is continuous with respect to the initial-segment topology of a model $M$, then the two models must have the same $0$, the same $1$ and so on through the standard part. The question is asking whether the pattern continues above this, so that the models have the same initial segments and hence the same order. Theorem~\ref{Theorem.kappa-like} shows that when $M$ is $\kappa$-like, then any $N$ presented on the initial-segment topology of $M$ will be $\kappa$-like, but it does not quite show for uncountable $\kappa$ that the order of $M$ and $N$ agree or that these orders are isomorphic. Note that models of arithmetic can have the same order without being isomorphic with respect to their addition and multiplication; for example, all countable nonstandard models of \PA\ have the same order type: $\omega+\Z\cdot\Q$.

\section{Topological models of arithmetic in the complex numbers}

In this last section, We provide further instances of topological models of arithmetic by revisiting the following result which presents uncountable models of arithmetic in the complex numbers. The result is due to Josef Ml{\v c}ek~\cite{Mlcek1973:A-representation-of-models-of-PA}; see also the second author's blog post~\cite{Hamkins.blog2018:Nonstandard-models-of-arithmetic-arise-in-the-complex-numbers}

\begin{theorem}\label{Theorem.Sub-ring-of-complex-numbers}
 Every model of  $\PA^-$ of size at most continuum has a presentation as a topological model, using a subspace of the real plane $\R^2$. Indeed, every such model is realized as a substructure of the complex numbers $\<\C,+,\cdot>$.
\end{theorem}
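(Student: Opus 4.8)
The plan is to realize $M$ as a substructure of $\<\C,+,\cdot>$ and then endow the image with the subspace topology inherited from $\C=\R^2$. Since complex addition and multiplication are continuous, their restrictions to any subset are continuous, so such an image is automatically a topological model of arithmetic on a subspace of the plane. Thus the entire problem reduces to a purely algebraic embedding question: realizing $M$ as a substructure of the complex field.

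First I would pass from $M$ to its ring of integers $\Z^M$, the discretely ordered ring of which $M$ is the non-negative part. Being ordered, $\Z^M$ is an integral domain of characteristic $0$, and its cardinality is at most $\continuum$. Let $K$ be its fraction field, still of characteristic $0$ and of size at most $\continuum$. The key algebraic fact is that every field of characteristic $0$ and cardinality at most $\continuum$ embeds into $\C$. To see this, note that $\C$ is algebraically closed of characteristic $0$ with transcendence degree $\continuum$ over its prime field $\Q$ (the algebraic numbers being only countable). Choose a transcendence basis $B$ of $K$ over $\Q$; since $|B|\leq\continuum$, one can map $B$ injectively onto an algebraically independent subset of $\C$, inducing an isomorphism of $\Q(B)$ onto a subfield of $\C$. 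Because $K$ is algebraic over $\Q(B)$ and $\C$ is algebraically closed, the standard extension theorem for embeddings into algebraically closed fields lets me extend this to a field embedding $K\into\C$.

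Restricting this embedding to $\Z^M$ and then to $M$ yields an injective map preserving $+$ and $\cdot$, so its image is a substructure of $\<\C,+,\cdot>$ isomorphic to $\<M,+,\cdot>$. The order of $M$ is of course not carried to any order on $\C$, but this causes no difficulty: in models of $\PA^-$ the order is existentially definable from addition, via $a\leq b\iff\exists c\,(a+c=b)$, so it is automatically preserved by any isomorphism of the $\{+,\cdot\}$-reducts. Hence the image, as an abstract structure, again models $\PA^-$ and is isomorphic to $M$.

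The main obstacle is simply the careful bookkeeping in the field embedding---verifying that the transcendence degree of $K$ is at most $\continuum$ (which follows from $|K|\leq\continuum$) and invoking the extension theorem correctly. Everything else is routine: the reduction to an algebraic embedding, and the observation that the subspace topology makes the arithmetic operations continuous, are both immediate from the continuity of $+$ and $\cdot$ on $\C$.
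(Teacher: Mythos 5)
Your proposal is correct, and its skeleton matches the paper's: reduce to the purely algebraic problem of embedding $M$ (via $\Z^M$ and its fraction field) into $\<\C,+,\cdot>$, then observe that the subspace topology on the image makes the operations continuous, the order being recovered inside the structure. The one genuine difference is the key algebraic lemma. The paper passes to the algebraic closure $A$ of the fraction field, takes an elementary extension of size continuum, and invokes the uncountable categoricity of algebraically closed fields of characteristic zero to land inside $\C$ (onto $\C$ when $|M|=\continuum$); you instead embed the fraction field $K$ directly by Steinitz's method---choose a transcendence basis $B$ of $K$ over $\Q$, map it injectively onto an algebraically independent subset of $\C$ (possible since $\C$ has transcendence degree $\continuum$ over $\Q$), and extend over the algebraic extension $K/\Q(B)$ using that $\C$ is algebraically closed. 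These are two faces of the same theorem, since the categoricity of $\mathrm{ACF}_0$ is itself proved by Steinitz's transcendence-degree analysis, but your route is more self-contained: it needs no algebraic closure of $K$, no elementary extension, and no model theory. What the paper's formulation buys is brevity and the immediate generalization it notes afterward, that every model of $\PA^-$ of size at most $|k|$ embeds into any uncountable algebraically closed field $k$ of characteristic zero. Your added observation that the order of $M$ is existentially definable from addition ($a\leq b\iff\exists c\ a+c=b$ in $\PA^-$), hence transported automatically by the $\{+,\cdot\}$-embedding, is a point the paper leaves implicit and is worth making explicit.
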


\begin{proof}
Suppose that $M$ is a model of $\PA^-$ of size at most continuum. Consider the integers of this model $\Z^M$, from which we may take the fraction field and then the algebraic closure, building a version of the field $A$ of algebraic numbers over $M$.\footnote{Of course, in a model of \PA, one could undertake a version of this construction inside $M$, but this would give rise to a different field, since one would be adding the roots of nonstandard degree polynomials this way. Both versions of the algebraic numbers would serve the purpose of this proof.} So $A$ is an algebraically closed field of characteristic zero, which has an elementary extension to such a field of size continuum. Since the theory of algebraically closed fields of characteristic zero is categorical in all uncountable powers, it follows that $A$ is isomorphic to a substructure of the complex numbers $\C$, and isomorphic to $\C$ itself if $M$ has size continuum. Since $M$ is isomorphic to a substructure of $\Z^M$, which sits inside $A$, it follows that $M$ is isomorphic to a substructure of $\C$, as claimed. Since complex number arithmetic is continuous with respect to the topology of the real plane, we thereby realize $M$ as a topological model of arithmetic using this subset of $\R^2$.
\end{proof}

In particular, every countable model of $\PA^-$ can be found as a substructure of the complex numbers. The same argument applied at higher cardinals shows that if $k$ is the uncountable algebraically closed field of characteristic zero, then every model of arithmetic $M\satisfies\PA^-$ of size at most the cardinality of $k$ embeds into $k$.

Note that by theorem \ref{Theorem.Polish-space-no-uncountable-chains}, if $M$ is a $\kappa$-like model of arithmetic for some uncountable $\kappa$ at most the continuum, then no subspace of $\R^2$ that continuously supports $M$ is analytic.

\small

\printbibliography

\end{document}